\newtheorem{mythm}{Theorem}[section]
\newtheorem{myprop}[mythm]{Proposition}
\newtheorem{mylem}[mythm]{Lemma}
\newtheorem{mycor}[mythm]{Corollary}
\newtheorem{mydefn}[mythm]{Definition}
\newtheorem{myrem}[mythm]{Remark}}
\newtheorem{myexam}[mythm]{Example}}%
\def\R{\mathbb R}
\def\C{\mathscr C}
\def\F{\mathscr F}
\def\E{\mathbb E}
\def\p{\mathbb P}
\def\e{\text{\rm{e}}}
\def\la{\langle}\def\d{\text{\rm{d}}}
\def\raa{\rangle}
\def\veps{\varepsilon}
\def\law{\mathscr{L}}
\def\pb{\mathscr{P}}
\def\wt{\widetilde}
\def\var{\mathrm{var}}
\def\W{\mathbb{W}}
\newcommand{\bard}{\bar{\mathscr{D}}_1^r}
\newenvironment{proof}{{\noindent\it Proof.}\ }{\hfill $\square$\par}
\numberwithin{equation}{section}
\begin{document}

\title{Viscosity solutions to HJB equations associated with optimal control problem  for McKean-Vlasov SDEs\footnote{Supported in part by National Key R\&D Program of China (No. 2022YFA1006000) and NNSFs of China (No. 12271397,  11831014)}}

\author{Jinghai Shao\thanks{Center for Applied Mathematics, Tianjin University, Tianjin 300072, China. Email: shaojh@tju.edu.cn
}}
\date{}
\maketitle

\begin{abstract}
  This work concerns the optimal control problem for McKean-Vlasov SDEs.  In order to characterize the value function, we develop the viscosity solution theory for Hamilton-Jacobi-Bellman (HJB) equations on the Wasserstein space using Mortensen's derivative. In particular,  a comparison principle for viscosity solution is established. Our approach is based on Borwein-Preiss variational principle to overcome the loss of compactness for bounded sets in the Wasserstein space.
\end{abstract}

\textbf{AMS MSC 2010}: 60H10, 35Q93, 49L25

\textbf{Key words}: Wasserstein space, Viscosity solution, McKean-Vlasov, Mortensen's derivative, Comparison principle

\section{Introduction}

This paper investigates viscosity solution theory to HJB equations associated with the optimal feedback control problem for McKean-Vlasov SDEs. Consider the following McKean-Vlasov SDE:
\begin{equation}\label{o-1}
\d X_t=b(t,X_t,\law_{X_t},\alpha_t)\d t+\sigma(t,X_t,\law_{X_t})\d W_t,\quad X_s=\xi\ \text{with $\law_{\xi}=\mu$},
\end{equation}
where $b:[0,T]\times \R^d\times \pb(\R^d)\times \pb(U)\to \R^d$, $\sigma:[0,T]\times \R^d\times \pb(\R^d)\to \R^{d\times d}$; $(W_t)_{t\geq 0}$ is a $d$-dimension Wiener process; $\pb(\R^d)$ ($\pb(U)$) denotes the collection of all probability measures over $\R^d$ ($U$ respectively), called Wasserstein space for simplicity; $U$ is a compact set in $\R^k$ for some positive integer $k$; $\law_{\xi}$ denotes the distribution of the random variable $\xi$; $(\alpha_t)$   represents the control policy belong to the set of admissible controls $\Pi_{s,\mu}$, detailed in next section. Given certain measurable functions $f$ and $g$, we aim to minimize the following objective function: for $0\leq s<T<\infty$,
\begin{equation}\label{o-2} J(s,\mu;\alpha):=\E\Big[\int_s^Tf(r,X_r,\law_{X_r},\alpha_r)\d r+g(X_T,\law_{X_T})\Big].
\end{equation}
The corresponding value function is given by
\begin{equation}\label{o-3}
V(s,\mu)=\inf\nolimits_{\alpha\in \Pi_{s,\mu}} J(s,\mu;\alpha).
\end{equation}

The optimal control problem of McKean-Vlasov SDEs is motivated by the mean-field game theory developed by \cite{LL06} and \cite{HCM06}. It has been studied in \cite{AD10,BDL,CD15} by maximum principle method, in \cite{BFY15,BIMS,LP16,Pham17} by dynamic programming method.
As the state variable of the value function contains the probability measure, the optimal control problem \eqref{o-3} is essentially an infinite dimensional problem. To characterize the value function, many approaches have been proposed to develop  the viscosity solution theory to HJB equations on the Wasserstein space.  For instance, Burzoni et al. \cite{BIMS} used the linear functional derivative; Gangbo et al. \cite{GNT} used the Riemannian tangent space structure; Bensoussan et al. \cite{BFY15} studied the probability densities, which were viewed as elements in $L^2(\R^d)$ and develop the G\^ateaux differential structure in the Hilbert space $L^2(\R^d)$.  See \cite[Chapter 5]{Car1} for the discussion on the relationship between different derivatives in the Wasserstein space. In order to guarantee the value function to be the unique viscosity solution of the corresponding HJB equation, the crucial point is to establish the comparison principle for viscosity sub- and super-solution. On this topic, Pham and Wei \cite{Pham17} adopted the approach of Lions' lifting  \cite{Card,Lions}, which provides  a lifting identification between measures and random variables, then used the theory on viscosity solutions to HJB equations in Hilbert space (cf. \cite{Li88}) to characterize the value function. This method works in the Hilbert space $L^2(\F_0;\R^d)$ instead of the Wasserstein space itself.
Recently, Burzoni et al. \cite{BIMS} developed an intrinsic approach to prove the uniqueness of viscosity solution without using Lions' lifting. In \cite{BIMS}, the linear functional derivative on $\pb(\R^d)$ is used and the key point is the subtle construction of a distance-like function on the Wasserstein space, that is,   for any two probability measures $\mu$, $\nu$  in $\pb(\R^d)$,
\[d(\mu,\nu):=\sum_{j=1}^\infty c_j\la \mu-\nu, f_j\raa^2,\]
where the countable set $\{f_j\}_{j\geq 1}$ is carefully constructed  in order to estimate the linear functional derivative of $d$ by itself. Besides, the set of admissible controls in \cite{BIMS} is restricted to contain only the measurable deterministic functions of time.

In our previous work \cite{Sh23a},  the existence of the optimal Markovian feedback controls and the dynamic programming principle have been investigated.
Our purpose of current work is to develop the viscosity solution theory to HJB equations on the Wasserstein space to characterize the value function. To this aim,  we shall use Mortensen's derivative on the Wasserstein space to establish the HJB equations satisfied by the value function.  When the diffusion coefficient $\sigma$ in \eqref{o-1} is a constant nondegenerate matrix, the distribution of the controlled process $X_t$ owns probability density w.r.t. the Lebesgue measure. Mortensen \cite{Mor66a,Mor66} introduced the Fr\'echet derivative for the functionals of the probability densities, and   Benes and Karatzas \cite{BK83} developed it to solve nonlinear filtering problem. We also use Mortensen's derivative (with certain modification), see Definition \ref{def-6} below,  to characterize the value function as a viscosity solution to a differential equation on the space of probability densities.

The advantage of adopting Mortensen's derivative consists of two folds: first, the tangent spaces at every probability measure are the same, so the Wasserstein space with such a differential structure can be viewed as a flat space. In contrast, the Wasserstein space studied such as in Gangbo and Swiech \cite{GS14}, Gangbo et al.\,\cite{GNT} and  Shao \cite{Sh23c}, is a curved space. Clearly, it is easier to do calculus in the Wasserstein space using Mortensen's derivative. Second,     we can find suitable smooth functions w.r.t. Mortensen's differential geometry to establish the comparison principle on the Wasserstein space similar to the standard investigation of HJB equations on the Euclidean space (cf. e.g. \cite{CL,FL93,YZ99}).  Nevertheless, since the Wasserstein space is an infinite dimensional space, and bounded sets on it are not necessary precompact, we use Borwein-Preiss variational principle to ensure the existence of maximum/minimum points in the study of viscosity sub/super-solution.

The theory of viscosity solutions for Hamilton-Jacobi equations in infinite dimensional spaces was initiated by Crandal and Lions \cite{CL85}, where Banach space with Radon-Nikodym property was considered. Ambrosio and Feng \cite{AF14} studied Hamilton-Jacobi equations in geodesic metric spaces, where metric derivative was used, and the results were applied to the Wasserstein space $\pb_2(\R^d)$.  Gangbo and Swiech \cite{GS15} also considered the Hamilton-Jacobi equations using the metric derivative structure on the Wasserstein space.

This work is organized as follows. In Section 2, we present the framework of the optimal control problem, especially introduce the set of control policies studied in this work.  In Section 3, a subspace $\pb_1^r(\R^d)$ is introduced, which is proved to be conservative for the controlled process; see Lemma \ref{lem-7} below. Then, under certain explicit conditions,  the value function is shown to be a  viscosity solution to an appropriate HJB equation on $\pb_1^r(\R^d)$ in terms of Mortensen's derivative. Section 4 is devoted to establishing the comparison principle for the HJB equation, which ensures that the value function is the unique viscosity solution satisfying certain continuity property.

\section{Framework of the optimal control problem}

In a given probability space $(\Omega,\F,\p)$, we use $\law_X$ to denote the law of a random variable $X$. For two probability measures $\mu,\nu$ over $\R^d$, the total variation distance between them is defined by
\[\|\mu-\nu\|_\var=\sup\big\{|\la f,\mu \raa -\la f, \nu\raa|;\,f\in \mathscr{B}(\R^d), |f|\leq 1\big\},
\]where $\la f,\mu\raa=\int_{\R^d} f(x)\mu(\d x)$.
The $L^p$-Wasserstein distance between $\mu$ and $\nu$ is defined by
\begin{equation}\label{W-1}
\W_p(\mu,\nu)=\inf_{\Gamma\in \mathcal{C}(\mu,\nu)} \!\Big(\int_{\R^d\times\R^d}\!\!|x-y|^p\,\Gamma(\d x,\d y)\Big)^{\frac 1p},\quad p\geq 1,
\end{equation}
where $\mathcal{C}(\mu,\nu)$ denotes the collection of all couplings of $\mu$, $\nu$ on $\R^d\times\R^d$.
Let $T>0$ be a given constant throughout this work. $U$ is a compact subset of $\R^k$ for some $k\geq 1$. Let $\pb(U)$ and $\pb(\R^d)$ denote the set of probability measures over $U$ and  $\R^d$ respectively. For $p\geq 1$, put
\begin{align*}\pb_p(\R^d)&=\Big\{\mu\in \pb(\R^d);\, \int_{\R^d}|x|^p\mu(\d x)<\infty\Big\}.
\end{align*}
Let $\C([s,t];\R^d)$ be the path space of all continuous functions from $[s,t]$ to $\R^d$.

For a measurable function $u:U\to \R$, it can be generalized as a functional on $\pb(U)$ via
\[u(\alpha):=\int_{U}u(x)\alpha(\d x),\quad \forall \, \alpha\in \pb(U).\] In the following, we often use $u(\alpha)$ instead of $\la u,\alpha\raa$ or $\int_{U}u(x)\alpha(\d x)$ to simplify the notation without mentioning it again.

Consider the following controlled stochastic dynamical system characterized by a SDE of McKean-Vlasov type:
\begin{equation}\label{a-1}
\d X_t= b(t,X_t,\law_{X_t},\alpha_t )  \d t+\sigma(t,X_t,\law_{X_t})\d W_t,\quad X_s=\xi,
\end{equation}
where $b:[0,T]\times \R^d\times \pb(\R^d)\times \pb(U)\to \R^d$, $\sigma:[0,T]\times \R^d\times \pb(\R^d)\to \R^{d\times d}$, $(W_t)_{t\geq 0}$ is a $d$-dimensional Brownian motion; $(\alpha_t)$ is a $\pb(U)$-valued process represented the controls  imposed  on the studied system.

\begin{mydefn}\label{def-1}
Given $s\in [0,T)$, $\mu\in \pb_1(\R^d)$, a Markovian feedback control is a term $\Theta=(\Omega,\F,\p$, $ \{\F_t\}_{t\geq 0}, W_\cdot, X_\cdot, \alpha_\cdot)$ such that
\begin{enumerate}
  \item[$\mathrm{(i)}$] $(\Omega,\F,\p)$ is a probability space with filtration $\{\F_t\}_{t\geq 0}$;
  \item[$\mathrm{(ii)}$] $(W_t)_{t\geq 0}$ is an $\F_t$-adapted Brownian motion;
  \item[$\mathrm{(iii)}$] $(\alpha_t)_{t\in [s,T]}$ is a $\pb(U)$-valued measurable process on $\Omega$ adapted to $\{\F_t\}$  such that for any $\F_s$-measurable random variable $\xi$ satisfying $\law_\xi=\mu$, SDE \eqref{a-1} admits a weak solution $(X_t)_{t\in [s,T]}$  with initial value $X_s=\xi$, and the uniqueness in law holds for SDE \eqref{a-1}.
  \item[$\mathrm{(iv)}$] For each $t\in [s,T]$, there exists a measurable functional $F_t\R^d\to \pb(U)$ such that $\alpha_t=F_t(X_t)$.
\end{enumerate}
\end{mydefn}
The set of all feedback controls $\Theta$ corresponding to the initial value $(s,\mu)$ is denoted by $\Pi_{s,\mu}$.

Given two measurable functions $f:[0,T]\times \R^d\times \pb(\R^d)\times \pb(U)\to \R$ and $g:\R^d\times \pb(\R^d)\to \R$, the objective function is defined by
\begin{equation}\label{a-3}
J(s,\mu;\Theta)=\E_{s,\mu}\Big[\int_s^{T } f(r,X_r,\law_{X_r},\alpha_r)\d r+g(X_{T},\law_{X_{T}})\Big]
\end{equation} for $s\!\in \![0,T)$ and $\mu\!\in\! \pb(\R^d)$, where $\E_{s,\mu}$ means taking expectation w.r.t.\,the initial value $\law_{X_s}\!=\!\mu$.
The value function is defined by
\begin{equation}\label{a-4}
V(s,\mu)=\inf_{\Theta\in \Pi_{s,\mu}} J(s,\mu;\Theta).
\end{equation}
A feedback control $\Theta^\ast\in \Pi_{s,\mu}$ is said to be optimal, if it satisfies
 $J(s,\mu;\Theta^\ast)=V(s,\mu).$

Notice that under the uniqueness condition of Definition \ref{def-1}(iii), $J(s,\mu;\Theta)$ and hence $V(s,\mu)$ are well defined.
Namely, for any two given  random variables $\xi,\tilde \xi$ with $\law_\xi\!=\!\law_{\tilde \xi}\!\in \!\pb(\R^d)$,  the values of $J(s,\mu;\Theta)$ and $V(s,\mu)$ will be fixed no matter which initial value $X_s=\xi$ or $X_s=\tilde \xi$ is used to determine the solution of SDE \eqref{a-1} for the process $(X_t)$ in \eqref{a-3}.
Indeed,
noticing \[\E[g(X_T,\law_{X_T})]=\int_{\R^d} g(x,\law_{X_T}) \law_{X_T}(\d x),\] we see that the term $\E[g(X_T,\law_{X_T})]$ in \eqref{a-3} depends only on the distribution of $X_T$, which is determined by the law $\mu$ of $\xi$ due to Definition \ref{def-1}(iii). Similar treatment can be applied to the term $\E\Big[\int_s^T \!f(r,X_r,\law_{X_r}, \alpha_r)\d r\Big]$ for the feedback control $\alpha_t=F_t(X_t)$.

We introduce the conditions on the controlled system used below.
\begin{itemize}
  \item[$\mathrm{(H1)}$] There exists a constant $K_1$ such that
      \[|b(t,x,\nu_1, \alpha)-b(s,y,\nu_2, \alpha)|
      +\|\sigma(t,x,\nu_1)-\sigma(s,y,\nu_2)\|\leq K_1\big(|t-s|+|x-y|+\W_1(\nu_1,\nu_2)\big)\]
      for all $s,t\in [0,T]$, $x,y\in \R^d$, $\nu_1,\nu_2\in \pb_1(\R^d)$,   and $\alpha\in \pb(U)$.
  \item[$\mathrm{(H2)}$] There exists a constant $K_2$ such that
      \[|b(t,x,\mu, \alpha)|\!+\!\|\sigma(t,x,\mu)\|
      \leq K_2\Big(1\!+\!|x|\!+\!\int_{\R^d}\! |z|\,\mu(\d z)\Big)
       \]  for $t\in [0,T], x\in \R^d,   \mu\in \pb_1(\R^d), \alpha\in \pb(U).$
  \item[$\mathrm{(H3)}$] There exist constants $K_3$ and $K_4$ such that
      \begin{gather*}
      |f(t,x,\mu,\alpha)-f(s,y,\nu,\alpha)|+ |g(x,\mu)-g(y,\nu)| \leq K_3\big(|t-s|+|x-y|+\W_1(\mu,\nu)\big),\\
      |f(t,x,\mu,\alpha)|+|g(x,\mu)|\leq K_4\Big(1+|x|+\int_{\R^d}|z|\mu(\d z)\Big)
      \end{gather*}
      for $t,s\in[0,T]$, $x,y\in \R^d$, $\mu,\nu\in \pb_1(\R^d)$, and $\alpha\in \pb(U)$.
\end{itemize}

The conditions (H1) and (H2) are used to ensure the existence of solution to SDE \eqref{a-1} under suitable control process $(\alpha_t)$. For example, under (H1) and (H2), for each $\alpha\in \pb(U)$, we consider the control strategy $\alpha_t\equiv \alpha\in \pb(U)$, then SDE \eqref{a-1} admits a unique weak solution for any initial value $X_s=\xi$ with $\law_\xi\in \pb_1(\R^d)$; see, e.g.
\cite[Theorems 2.1 and 3.1]{Fun}. Moreover, if assume, in addition, $\sigma(t,x,\mu)$ depends only on $t,x$, then SDE \eqref{a-1} admits a unique strong solution for $\alpha_t\equiv \alpha$ due to \cite[Theorem 2.1]{Wang18}.  So, there is an admissible control $\Theta$ associated with such deterministic control strategy $\alpha_t\equiv \alpha$.
Consequently, $\Pi_{s,\mu}$ is not empty.

According to \cite{Sh23a}, the dynamic programming principle holds for $V$ given in \eqref{a-4}.

\begin{myprop}[\cite{Sh23a}, Proposition 4.1]\label{lem-3}
Suppose that $\mathrm{(H1)}$ and $\mathrm{(H2)}$ hold. Then, for any $s\leq t\leq T$, $\mu\in \pb_1(\R^d)$,  it holds
\begin{equation}\label{b-1}
V(s,\mu)=\inf_{\Theta\in \Pi_{s,\mu}}\Big\{ \E_{s,\mu}\Big[\int_s^t\!f(r,X^{s,\mu,\alpha}_r, \law_{X^{s,\mu,\alpha}_r},\alpha_r)\d r+V(t, \law_{X^{s,\mu,\alpha}_t})\Big]\Big\},
\end{equation} where  $X_\cdot^{s,\mu,\alpha}$ stands for  the controlled process associated with $\Theta\in \Pi_{s,\mu}$.
\end{myprop}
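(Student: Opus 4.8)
The plan is to prove \eqref{b-1} by establishing two inequalities, writing $W(s,\mu)$ for its right-hand side. As a preliminary step I would record, from $\mathrm{(H1)}$--$\mathrm{(H2)}$ and a Gronwall estimate applied to \eqref{a-1}, the uniform moment bound $\sup_{r\in[s,T]}\E_{s,\mu}|X^{s,\mu,\alpha}_r|<\infty$, uniformly over $\Theta\in\Pi_{s,\mu}$; this keeps all the marginal laws in $\pb_1(\R^d)$ and makes every quantity below well defined.

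For $V(s,\mu)\ge W(s,\mu)$: fix $\Theta\in\Pi_{s,\mu}$ and set $\nu:=\law_{X^{s,\mu,\alpha}_t}$. The structural point is a restart property at time $t$: since $\alpha_r=F_r(X_r)$ is a feedback control and uniqueness in law holds for \eqref{a-1}, the restriction of $\Theta$ to $[t,T]$ (with the shifted filtration and $X_t$ as the new initial variable, whose law is $\nu$) is an admissible control $\Theta^{t}\in\Pi_{t,\nu}$, and the flow $(\law_{X_r})_{r\ge t}$ is determined by $\nu$ alone. Splitting the integral in \eqref{a-3} at $t$,
\[
J(s,\mu;\Theta)=\E_{s,\mu}\!\int_s^t\! f(r,X_r,\law_{X_r},\alpha_r)\,\d r+J(t,\nu;\Theta^{t})\ \ge\ \E_{s,\mu}\!\int_s^t\! f(r,X_r,\law_{X_r},\alpha_r)\,\d r+V(t,\nu),
\]
and since $V(t,\nu)$ is non-random the right-hand side is $\ge W(s,\mu)$; taking the infimum over $\Theta\in\Pi_{s,\mu}$ gives $V(s,\mu)\ge W(s,\mu)$.

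For $V(s,\mu)\le W(s,\mu)$: fix $\Theta\in\Pi_{s,\mu}$ and $\veps>0$, let $\nu:=\law_{X^{s,\mu,\alpha}_t}$ as above, and choose $\Theta'\in\Pi_{t,\nu}$ with feedback functions $(G_r)_{r\in[t,T]}$ such that $J(t,\nu;\Theta')\le V(t,\nu)+\veps$. I would then build a concatenated feedback control $\tilde\Theta$ on $[s,T]$, using feedback $F_r$ on $[s,t)$ and $G_r$ on $[t,T]$, on a probability space carrying both pieces. The key observation is that the flow of marginal laws generated by $\tilde\Theta$ agrees with that of $\Theta$ on $[s,t]$ (so its value at $t$ is $\nu$) and, by uniqueness in law for \eqref{a-1} started from a variable with law $\nu$, agrees with that of $\Theta'$ on $[t,T]$; hence
\[
J(s,\mu;\tilde\Theta)=\E_{s,\mu}\!\int_s^t\! f(r,X_r,\law_{X_r},\alpha_r)\,\d r+J(t,\nu;\Theta')\le\E_{s,\mu}\!\int_s^t\! f(r,X_r,\law_{X_r},\alpha_r)\,\d r+V(t,\nu)+\veps .
\]
Thus $V(s,\mu)\le\E_{s,\mu}\!\int_s^t\! f(r,X_r,\law_{X_r},\alpha_r)\,\d r+V(t,\law_{X_t})+\veps$ for this $\Theta$; letting $\veps\downarrow0$ and taking the infimum over $\Theta\in\Pi_{s,\mu}$ yields $V(s,\mu)\le W(s,\mu)$.

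I expect the main obstacle to be in the second inequality, namely the rigorous construction of $\tilde\Theta$ and the verification that it is admissible in the sense of Definition \ref{def-1} — that \eqref{a-1} with the glued feedback $F_r\mathbf 1_{[s,t)}+G_r\mathbf 1_{[t,T]}$ admits a weak solution on $[s,T]$ for which uniqueness in law holds. Unlike the classical non-mean-field case one cannot paste individual trajectories, since $b$ and $\sigma$ depend on $\law_{X_r}$; instead the concatenation must be performed at the level of the flow of marginal laws, using uniqueness in law of \eqref{a-1} to guarantee that on $[t,T]$ the glued dynamics reproduce exactly the measure flow generated by $\Theta'$ from $\nu$. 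A secondary point is measurability in $r$ of the glued feedback functional, which is immediate from that of $F_\cdot$ and $G_\cdot$, together with checking $\Pi_{t,\nu}\ne\emptyset$, which follows from the discussion after $\mathrm{(H3)}$ on constant controls.
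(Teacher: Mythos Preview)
The paper does not prove this proposition; it is quoted verbatim from \cite{Sh23a}, Proposition~4.1, and no argument is given here. So there is nothing in the present paper to compare your proposal against.

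That said, your outline is the standard dynamic programming argument and is the natural route: two inequalities, one by restriction of a control to $[t,T]$, the other by concatenation of a near-optimal control on $[t,T]$ with a given control on $[s,t)$. The point you single out as the main obstacle---checking that the glued feedback $r\mapsto F_r\mathbf 1_{[s,t)}(r)+G_r\mathbf 1_{[t,T]}(r)$ yields an admissible $\tilde\Theta\in\Pi_{s,\mu}$, i.e.\ that \eqref{a-1} with this feedback has a weak solution unique in law---is indeed the substantive step, and your observation that the pasting must be done at the level of the flow of marginals (rather than pathwise) because of the McKean--Vlasov dependence is exactly the right diagnosis. One small refinement: in the first inequality you should also justify that the restricted object $\Theta^t$ satisfies Definition~\ref{def-1}(iii) on $[t,T]$, in particular that uniqueness in law for \eqref{a-1} on $[t,T]$ with initial law $\nu$ is inherited from uniqueness on $[s,T]$; this is not entirely automatic in the weak formulation and is typically handled by the same decoupling/flow argument you invoke for the concatenation.
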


\begin{myprop}[\cite{Sh23a}, Proposition 4.3]\label{lem-2}
Assume that   $\mathrm{(H1)}$-$\mathrm{(H3)}$ hold and $\sigma(t,x,\mu)$ depends only on $t,x$, then the value function  $V(s,\mu)$   defined in \eqref{a-4} satisfies that there exists a constant $C>0$ such that
\begin{equation}\label{cont}
|V(s,\mu)-V(s',\mu')|\leq C\big( \sqrt{|s'-s|} +\W_1(\mu,\mu')\big)
\end{equation} for any $s,s'\in [0,T]$, $\mu,\mu'\in \pb_1(\R^d)$.
\end{myprop}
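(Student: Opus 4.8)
\textbf{Strategy.} The plan is to establish the two moduli of continuity separately and to combine them by the triangle inequality: first the Lipschitz bound in the measure variable, and then the $\tfrac12$-H\"older bound in time, deriving the latter from the former by means of the dynamic programming principle (Proposition \ref{lem-3}). Everything rests on the following a priori estimates, valid under $\mathrm{(H1)}$--$\mathrm{(H2)}$ with $\sigma=\sigma(t,x)$, which follow from Gronwall's lemma together with the Burkholder--Davis--Gundy inequality, \emph{uniformly over} $\Theta\in\Pi_{s,\mu}$:
\[\E\Big[\sup_{r\in[s,T]}|X_r^{s,\mu,\alpha}|\Big]\le C\Big(1+\int_{\R^d}|z|\,\mu(\d z)\Big),\qquad \E\big[\,|X_{s'}^{s,\mu,\alpha}-\xi|\,\big]\le C\sqrt{s'-s}\,\Big(1+\int_{\R^d}|z|\,\mu(\d z)\Big).\]
By the linear growth in $\mathrm{(H3)}$ these already make $J(s,\mu;\Theta)$, hence $V(s,\mu)$, finite; throughout, $C$ is understood to be uniform on sets of initial laws with bounded first moment.

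\textbf{Step 1: Lipschitz continuity in $\mu$.} Fix $\veps>0$ and choose $\Theta=(\Omega,\F,\p,\{\F_t\},W,X,\alpha)\in\Pi_{s,\mu}$ with $\alpha_t=F_t(X_t)$ and $J(s,\mu;\Theta)\le V(s,\mu)+\veps$. Enlarging the probability space if necessary, pick an $\F_s$-measurable $\xi'$ with $\law_{\xi'}=\mu'$ and $\E|\xi-\xi'|=\W_1(\mu,\mu')$ (an optimal coupling exists since $\mu,\mu'\in\pb_1(\R^d)$), and let $X'$ solve \eqref{a-1} driven by the \emph{same} $W$ with the \emph{same} feedback $\{F_t\}$, i.e.\ $\alpha'_t:=F_t(X'_t)$, and $X'_s=\xi'$; put $\Theta'=(\Omega,\F,\p,\{\F_t\},W,X',\alpha')$, which one verifies belongs to $\Pi_{s,\mu'}$. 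Decomposing
\[b(r,X_r,\law_{X_r},F_r(X_r))-b(r,X'_r,\law_{X'_r},F_r(X'_r))=\big[b(r,X_r,\law_{X_r},F_r(X_r))-b(r,X'_r,\law_{X'_r},F_r(X_r))\big]+R_r,\]
with $R_r:=b(r,X'_r,\law_{X'_r},F_r(X_r))-b(r,X'_r,\law_{X'_r},F_r(X'_r))$, the first bracket is bounded by $K_1\big(|X_r-X'_r|+\W_1(\law_{X_r},\law_{X'_r})\big)\le K_1\big(|X_r-X'_r|+\E|X_r-X'_r|\big)$ by $\mathrm{(H1)}$; since $\sigma$ does not depend on the measure, Burkholder--Davis--Gundy and Gronwall then give $\E\big[\sup_{r\le T}|X_r-X'_r|\big]\le C\,\E|\xi-\xi'|=C\,\W_1(\mu,\mu')$, hence $\W_1(\law_{X_r},\law_{X'_r})\le C\,\W_1(\mu,\mu')$ for all $r$. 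Feeding this into $\mathrm{(H3)}$ (after the analogous splitting of the control discrepancy inside $f$, using that $f$ and $g$ are Lipschitz in $(t,x,\text{measure})$ uniformly in the control) yields $|J(s,\mu';\Theta')-J(s,\mu;\Theta)|\le C\,\W_1(\mu,\mu')$, whence $V(s,\mu')\le V(s,\mu)+C\,\W_1(\mu,\mu')+\veps$. Exchanging $\mu$ and $\mu'$ and letting $\veps\downarrow 0$ gives $|V(s,\mu)-V(s,\mu')|\le C\,\W_1(\mu,\mu')$.

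\textbf{Step 2: $\tfrac12$-H\"older continuity in $s$.} Let $0\le s<s'\le T$. Apply Proposition \ref{lem-3} with $t=s'$:
\[V(s,\mu)=\inf_{\Theta\in\Pi_{s,\mu}}\E_{s,\mu}\Big[\int_s^{s'}f\big(r,X_r^{s,\mu,\alpha},\law_{X_r^{s,\mu,\alpha}},\alpha_r\big)\,\d r+V\big(s',\law_{X_{s'}^{s,\mu,\alpha}}\big)\Big].\]
For any $\Theta$, $\mathrm{(H3)}$ and the moment bound give $\big|\E_{s,\mu}\int_s^{s'}f\,\d r\big|\le C(s'-s)\big(1+\int_{\R^d}|z|\,\mu(\d z)\big)$, while Step 1 together with the short-time estimate from the Strategy paragraph gives
\[\big|V(s',\law_{X_{s'}})-V(s',\mu)\big|\le C\,\W_1(\law_{X_{s'}},\mu)\le C\,\E|X_{s'}-\xi|\le C\sqrt{s'-s}\,\Big(1+\int_{\R^d}|z|\,\mu(\d z)\Big).\]
Both bounds are uniform in $\Theta$, so taking the infimum yields $|V(s,\mu)-V(s',\mu)|\le C\sqrt{s'-s}$. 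Combining this with Step 1 via the triangle inequality gives \eqref{cont}.

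\textbf{Main obstacle.} The essential difficulty is Step 1. One must manufacture from $\Theta$ a \emph{genuinely admissible} control $\Theta'\in\Pi_{s,\mu'}$ — the weak solvability and uniqueness in law of the transferred McKean--Vlasov equation, and the feedback form of $\alpha'$, i.e.\ Definition \ref{def-1}$\mathrm{(iii)}$--$\mathrm{(iv)}$ — whose controlled process stays $\W_1$-close to $X$, and, crucially, one must dominate the control-discrepancy remainders $R_r$ and their counterpart inside $f$: the hypotheses $\mathrm{(H1)}$ and $\mathrm{(H3)}$ only provide a modulus of continuity in $(t,x,\text{measure})$ that is \emph{uniform over} the control, not any continuity in the control variable itself, while the feedback maps $F_t$ are merely measurable in $x$. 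Retaining the \emph{identical} feedback $\{F_t\}$ and exploiting that $\sigma$ is measure-independent is precisely what reduces the propagation of closeness to a Gronwall estimate seeded by $\E|\xi-\xi'|$; controlling the discrepancy terms, however, requires the finer structural properties of the admissible class established in \cite{Sh23a}. The remaining ingredients — the uniform-in-control moment and time-increment estimates for \eqref{a-1} — are routine consequences of $\mathrm{(H1)}$--$\mathrm{(H2)}$. Finally, we note that the lack of precompactness of balls in the Wasserstein space, which elsewhere in the paper compels the use of the Borwein--Preiss variational principle, does not enter this proposition.
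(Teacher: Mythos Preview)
The paper does not prove this proposition; it is quoted verbatim from \cite{Sh23a}, so there is no in-paper argument to compare against. I therefore assess your proposal on its own merits.

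Your Step 2 is fine and standard: given the Lipschitz estimate in $\mu$ from Step 1, the dynamic programming principle together with the short-time increment bound $\E|X_{s'}-\xi|\le C\sqrt{s'-s}$ yields the $\tfrac12$-H\"older modulus in time.

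The genuine gap is in Step 1, and you identify it yourself in the ``Main obstacle'' paragraph without closing it. After the decomposition you write, the remainder
\[
R_r=b\big(r,X'_r,\law_{X'_r},F_r(X_r)\big)-b\big(r,X'_r,\law_{X'_r},F_r(X'_r)\big)
\]
is a pure control increment. Hypothesis $\mathrm{(H1)}$ gives Lipschitz continuity of $b$ in $(t,x,\mu)$ \emph{uniformly in} $\alpha$, but says nothing whatsoever about continuity in $\alpha$; and the feedback maps $F_t$ are only measurable in $x$, so $F_r(X_r)$ and $F_r(X'_r)$ need not be close even when $|X_r-X'_r|$ is small. Consequently your Gronwall loop does not close: you cannot conclude $\E\sup_r|X_r-X'_r|\le C\,\E|\xi-\xi'|$ from the displayed bound on the first bracket alone. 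The same obstruction reappears in the cost comparison, since $\mathrm{(H3)}$ likewise carries no modulus in the control variable, so the asserted bound $|J(s,\mu';\Theta')-J(s,\mu;\Theta)|\le C\,\W_1(\mu,\mu')$ is not justified either. Finally, the admissibility of $\Theta'$ is asserted (``which one verifies belongs to $\Pi_{s,\mu'}$'') but not verified: Definition \ref{def-1}$\mathrm{(iii)}$ requires weak existence and uniqueness in law of the McKean--Vlasov SDE with feedback $\{F_t\}$ for initial laws equal to $\mu'$, and this does not follow automatically from admissibility at $\mu$.

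In short, the architecture (couple initial data optimally, transfer the feedback, Gronwall, then DPP for the time variable) is the natural one, but as written the argument is incomplete at exactly the point you flag; appealing to ``finer structural properties of the admissible class established in \cite{Sh23a}'' is a citation, not a proof.
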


\section{Existence of viscosity solutions to HJB equations}

The value functions are usually not smooth enough, and hence it is necessary to develop viscosity solution theory to the HJB equations on the Wasserstein space similar to the classical setting in $\R^d$ (cf. e.g. \cite{CL}). Now the value function $V$ acts on the Wasserstein space  $\pb(\R^d)$, on which the derivative of $\mu\mapsto V(s,\mu)$ can be defined in several different ways. As the Wasserstein space is an infinite dimensional space, these derivatives  on $\pb(\R^d)$ are quite different to each other. To choose which derivative on $\pb(\R^d)$ to establish the  HJB equation, from the viewpoint of application, a primary request is to ensure that the value function is the unique viscosity solution to the corresponding HJB equation so that numerical algorithm can be used to approximate the value function.

We shall introduce a subspace $\pb^r_1(\R^d)$ in \eqref{f-0} of $\pb_1(\R^d)$ and show that $\pb^r_1(\R^d)$  is conservative for the controlled process $(X_t)$ in Lemma \ref{lem-7}. Then, the value function is proved to be a viscosity solution to an appropriate HJB equation on $\pb^r_1(\R^d)$ in Theorem \ref{thm-3}. As being well-known, the main challenge to characterize the value function is to establish the uniqueness of viscosity solution to the corresponding HJB equation. To this aim we introduce an auxiliary function $\gamma$ in \eqref{norm} and a weighted Sobolev norm associated with $\gamma$.   Also, we put forward the square of weighted Sobolev norm $F$ on $\pb^r_1(\R^d)$ as an auxiliary functional in Lemma \ref{lem-8} and study its Mortensen's derivative. Then, using the Borwein-Preiss variational principle to overcome the loss of compactness for bounded sets in the Wasserstein space, we finally establish the comparison principle in Theorem \ref{thm-4}, which implies the uniqueness of viscosity solution as desired.

Let us  first introduce a   subspace of $\pb(\R^d)$:
\begin{equation} \label{f-0}
\pb_1^r(\R^d) =\Big\{\mu\!\in\! \pb_1(\R^d);\, \text{$\rho=\!\frac{\d \mu }{\d x}\!\in \!\mathcal{C}^1(\R^d) $  satisfying $\int_{\R^d}\!  \e^{|x|}\big(\rho(x)^2\!+\!|D \rho(x)|^2\big)\d x<\infty$}\Big\},
\end{equation}
where $\frac{\d \mu }{  \d x}$ denotes the Radon-Nikodym derivative of $\mu$  w.r.t.\,the Lebesgue measure $\d x$.
Let
\begin{equation}\label{f-0.5}
\mathscr{D}_1^r=\big\{\rho\in \mathcal{C}^1(\R^d); \rho(x)\d x\in \pb_1^r(\R^d)\big\}
\end{equation} be the collection of densities of the probability measures in $\pb_1^r(\R^d)$. In the sequel, in order to simplify the notation   we identify the functional
$ u$ over $\pb_1^r(\R^d)$ with a functional over the set of the probability densities $\mathscr{D}_1^r$, that is,   let $u(\rho)=u(\mu)$ if $\rho=\frac{\d \mu}{\d x}$ for   $\mu \in \pb_1^r(\R^d)$.

For a nonnegative integer $k$ and a domain $B\subset \R^d$, let $\mathcal{C}^k(B)$ denote the vector space of all functions $\phi$ which, together with all their partial derivatives $D^\alpha \phi$ of orders $|\alpha|\leq k$, are continuous on $B$.

In the study of the uniqueness of viscosity solution to HJB equations, it is known that the continuity of the solution plays important role. By Proposition \ref{lem-2},  we can show the value function $V(s,\mu)$ is continuous in $\mu$ w.r.t.\! the $L^1$-Wasserstein distance. Nevertheless, if we view $V$ as a function on $\mathscr{D}_1^r$, we need to introduce a new distance on $\mathscr{D}_1^r$ such that the value function $V$ remains continuous under this distance. To this end, we introduce a new reference function $\gamma(x)$ and a weighted Sobolev norm on $\mathscr{D}_1^r$.

Let $\gamma\in \mathcal{C}^2(\R^d)$ satisfy $\gamma\geq 1$ and
\begin{equation}\label{norm}
  \gamma(x)=\begin{cases} 1, & \text{if $|x|\leq 1$},\\
  \e^{|x|},&\text{if $|x|>2$}.
  \end{cases}
\end{equation}
Then, for $|x|>2$,
\[\partial_{x_i} \gamma(x)=\e^{|x|}\frac{x_i}{|x|},\quad \partial_{x_ix_j}^2\gamma (x)=\e^{|x|}\Big(\frac{x_ix_j}{|x|^2}+\frac{\delta_{ij}}{|x|} -\frac{x_ix_j}{|x|^3}\Big),\quad i,j=1,\ldots, d,\]
where $\delta_{ij}=1$ if $i=j$; $\delta_{ij}=0$, otherwise.
Hence, together with the fact $\gamma \in \mathcal{C}^2(\R^d)$ and $\gamma\geq 1$, there exists a constant   $\kappa\geq 1$ such that
\begin{equation}\label{norm-1}
\begin{split}
  |D \gamma(x)|\leq \kappa \gamma(x),\quad \sum_{i,j=1}^d |\partial_{x_ix_j}^2 \gamma(x)|\leq \kappa\gamma(x),\quad x\in \R^d.
\end{split}
\end{equation}
Let
\[ H_1^2(\R^d)=\big\{h\in L^2(\R^d);   h, D  h\in L^2(\R^d)\big\}.\]
On $L^2(\R^d)$ and $H_1^2(\R^d)$, we introduce a weighted norm
\[\|h\|_{L^2(\gamma)}:=\Big(\int_{\R^d}\! |h(x)|^2\gamma(x)\d x\Big)^{\frac 12},\quad h\in L^2(\R^d), \] and a weighted Sobolev norm
\[\|h\|_{H_1^2(\gamma)} :=\Big(\int_{\R^d}\big( |h(x)|^2+ |Dh(x)|^2\big)\gamma(x)\d x\Big)^{\frac 12},\quad  h\in H_1^2(\R^d).\]
According to \cite[Theorem 6.15]{Vil},
\[\W_1(\rho(x)\d x,\tilde \rho(x)\d x)\leq \int_{\R^d} |x| |\rho(x)-\tilde \rho(x)|\d x,\quad \forall\, \rho(x)\d x,\, \tilde \rho(x)\d x\in \pb_1(\R^d).\]
By H\"older's inequality,
\begin{equation}\label{f-5.0}
\begin{aligned}
  \W_1(\rho(x)\d x,\tilde \rho(x)\d x)&\leq \Big(\int_{\R^d}|x|\gamma(x)^{-1}\d x\Big)^{\frac 12}\Big( \int_{\R^d} |\rho(x)-\tilde \rho(x)|^2\gamma(x)\d x\Big)^{\frac 12}\\
  &=\kappa_4 \|\rho-\tilde \rho\|_{L^2(\gamma)},
\end{aligned}
\end{equation} where   $\kappa_4=\int_{\R^d}|x|\gamma(x)^{-1}\d x<\infty $ due to the definition of   function $\gamma$.

\begin{mydefn}[Mortensen's derivative]\label{def-6}
Let $S$ be a continuous, real-valued function defined on some subset of $H_1^2(\R^d)$. Given a point $\rho$ in whose neighborhood $S$ is well defined. Suppose that there exists a real-valued linear functional $A$ on $H_1^2(\R^d)$ such that for all sequence $\{\phi\}\subset H_1^2(\R^d)$ with $\|\phi\|_{H_1^2(\gamma)}\to 0$,
\begin{equation}\label{f-1}
\frac{|S(\rho+\phi)-S(\rho)-A\phi|}{\|\phi\|_{H_1^2(\gamma)}} \longrightarrow 0,\qquad \text{as}\ \|\phi\|_{H_1^2(\gamma)}\to 0.
\end{equation}
Then, define the Fr\'echet differential of $S$ at $\rho$ in the direction $\phi$ by
\begin{equation}\label{f-2}
\delta S(\rho,\phi)=A \phi.
\end{equation}
Suppose further that $A$ is a continuous linear functional on $H_1^2(\R^d)$, then $A$ is a member of the dual space of $H_1^2(\R^d)$, which owns a representation of the form
\begin{equation}\label{f-3}
A\phi=\int_{\R^d}\big(F(x)\phi(x)+ \la G(x), D\phi(x)\raa \big) \d x.
\end{equation} Define the Fr\'echet derivative of $S$ at $\rho$ by
\begin{equation}\label{f-4}
\Big(\frac{\delta S(\rho)}{\delta \rho(x)},\frac{\delta S(\rho)}{\delta \rho'(x)}\Big)=(F(x), G(x))\in \R\times \R^d,
\end{equation} which is called Mortensen's derivative in this work.
\end{mydefn}
\begin{myrem}\label{rem-5.2}
  The integration in \eqref{f-3} at every point $\rho\in \mathscr{D}_1^r$ is  with respect to the same reference measure,  Lebesgue measure $\d x$, and is independent of $\rho$. From the viewpoint of Riemannian manifolds,   the tangent spaces  associated with Mortensen's derivative are the same for all  $\rho(x)\d x\in \pb^r_1(\R^d)$, and hence we look on  $\pb^r_1(\R^d)$ as a flat space when endowing $\pb^r_1(\R^d)$  with Mortensen's derivative.
\end{myrem}

\begin{myexam}\label{ex-2}
\begin{enumerate}
  \item Suppose $S(\rho)=\int_{\R} k(x)\rho(x)\d x$, then
  $  \big(\frac{\delta S(\rho)}{\delta \rho(x)},\frac{\delta S(\rho)}{\delta \rho'(x)}\big)=(k(x) ,0)$.
  \item Suppose $S(\rho)=\int_{\R} H(\rho(x),\rho'(x))\d x$, where $\rho'(x)=\frac{\d\rho(x)}{\d x}$. Then
      \[\Big(\frac{\delta S(\rho)}{\delta \rho(x)},\frac{\delta S(\rho)}{\delta \rho'(x)}\Big)=\Big( \frac{\partial H(\rho(x),\rho'(x))}{ \partial \rho},   \frac{\partial H(\rho(x),\rho'(x))}{\partial \rho'}\Big). \]
\end{enumerate}
\end{myexam}

Now we are ready to introduce the conditions on the coefficients viewed as functions on $\mathscr{D}_1^r$ instead of $\pb_1^r(\R^d)$. The Lipschitz conditions of $b$, $f$ and $g$ are a litter weaker than those given in conditions $(\mathrm{H}1)$, $(\mathrm{H}3)$ due to \eqref{f-5.0}.
\begin{itemize}
  \item[$(\mathbf{A}_1)$] Suppose that there exist constants $\tilde K_1,\tilde K_2$ such that
\begin{align}\label{f-5.1}
   |b(t,x,\rho,\alpha)\!-\!b(s,y,\tilde \rho,\alpha)|&\leq \tilde K_1 \!\big(|t-s|+\!|x-y|\!+\! \|\rho-\tilde \rho\|_{L^2(\gamma)} \big),\\
|b(t,x,\rho,\alpha)|&\leq \tilde K_2,  \label{f-5.3}
\end{align} for $s,t\in [0,T]$, $x,y\in\R^d$, $\rho,\,\tilde \rho\in \mathscr{D}_1^r$, $\alpha\in\pb(U)$.

  \item[$(\mathbf{A}_2)$] $\sigma(t,x,\rho)\equiv \sigma$, where $\sigma$ is a constant $d\times d$  matrix satisfying $A:=\sigma\sigma^\ast>0$.

  \item[$(\mathbf{A}_3)$] There exists a constant $\tilde K_3$ such that
      \begin{equation}\label{f-5.4}
      \begin{split}
        |f(t,x,\rho,\alpha)\!-\!f(s,y,\tilde \rho,\alpha)|\!+\! |g(x,\rho)\!-\!g(y,\tilde \rho)| &\leq\! \tilde K_3\big(|t\!-\!s|\!+\!|x\!-\!y|\!+ \! \|\rho-\tilde \rho\|_{L^2(\gamma)} \big)\\
        |f(t,x,\rho,\alpha)|\!+ \!|g(x,\rho)|&\leq \tilde K_3,
      \end{split}
      \end{equation}
      for $s,t\in [0,T]$, $x,y\in\R^d$, $\rho,\,\tilde\rho\in \mathscr{D}_1^r$, $\alpha\in\pb(U)$.
\end{itemize}

Let $(X_t)_{t\in [s,T]}$ be a controlled stochastic process satisfying \eqref{a-1} associated with the Markovian feedback control $\Theta\in \Pi_{s,\mu_0}$ with initial value $\law_{X_s}=\mu_0\in \pb_1^r(\R^d)$.
Under the nondegenerate condition $(\mathbf{A}_2)$,   the distribution of $X_t$  admits a density $\rho_t $. Moreover, $\rho_t $ satisfies the nonlinear Fokker-Planck-Kolmogorov equation:
\begin{equation}\label{f-5}
\begin{split}
\partial_t\rho_t(x)&=
\mathcal{L}_\alpha^\ast \rho_t(x)\\
&:=\frac 12\sum_{i,j=1}^d \! \frac{\partial^2}{\partial x_i\partial x_j}\Big(a_{ij} \rho_t(x)\Big) \!-\!\sum_{i=1}^d\! \frac{\partial}{\partial x_i}\big(b_i(t,x,\rho_t(x)\d x,\alpha_t) \rho_t(x)\big).
\end{split}
\end{equation}

\begin{mylem}\label{lem-7}
Suppose the conditions $(\mathbf{A}_1)$ and $(\mathbf{A}_2)$ hold. Then, for every $\mu_0\in\pb_1^r(\R^d)$ and every $\Theta\in \Pi_{s,\mu_0}$, the law $\law_{X_t}$ of the controlled process $X_t$ as a solution to  \eqref{a-1} with initial value $\law_{X_s}=\mu_0$ belongs to $\pb_1^r(\R^d)$ for every $t\in [s,T]$.
\end{mylem}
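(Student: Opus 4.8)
The plan is to show that the density $\rho_t$ of $\law_{X_t}$ stays in $\mathscr{D}_1^r$, i.e.\ that $\int_{\R^d} \e^{|x|}(\rho_t(x)^2 + |D\rho_t(x)|^2)\,\d x<\infty$ for all $t\in[s,T]$, by propagating this bound through the Fokker--Planck equation \eqref{f-5}. Since $\sigma\equiv\sigma$ is constant and nondegenerate with $A=\sigma\sigma^\ast>0$, the operator $\mathcal{L}_\alpha^\ast$ is uniformly parabolic with smooth (constant) leading coefficients, and the drift $b(t,\cdot,\rho_t\d x,\alpha_t)$ is bounded by \eqref{f-5.3}; standard parabolic regularity (e.g.\ the Nash--Moser--De Giorgi theory, or classical Schauder/Sobolev estimates for the Kolmogorov equation with bounded measurable drift) already guarantees that $\rho_t\in\mathcal{C}^1(\R^d)$ for $t>s$, so that the only substantive point is the integrability against the weight $\e^{|x|}$.

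First I would fix a smooth weight $w=\gamma$ as in \eqref{norm} (so $1\le w$, $w(x)=\e^{|x|}$ for $|x|>2$, and \eqref{norm-1} holds), and perform a formal energy estimate: multiply \eqref{f-5} by $2w\rho_t$ and integrate over $\R^d$, obtaining
\begin{equation}\label{plan-energy}
\frac{\d}{\d t}\int_{\R^d} w\rho_t^2\,\d x = \int_{\R^d} w\rho_t\,\mathcal{L}_\alpha^\ast\rho_t\,\d x \cdot 2.
\end{equation}
Integrating by parts twice in the second-order term produces a good term $-\int A D\rho_t\cdot D\rho_t\, w$ together with lower-order terms involving $Dw$, $D^2w$ (controlled by $\kappa w$ via \eqref{norm-1}) and, from the first-order term, contributions of the form $\int b\rho_t\cdot D(w\rho_t)$, which by boundedness of $b$, Cauchy--Schwarz and \eqref{norm-1} are absorbed into $\varepsilon\int A|D\rho_t|^2 w + C_\varepsilon\int w\rho_t^2$. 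This yields a Gr\"onwall inequality
$\frac{\d}{\d t}\int w\rho_t^2 \le C\int w\rho_t^2$
after discarding the (negative) gradient term, and hence $\int w\rho_t^2\,\d x \le \e^{C(t-s)}\int w\rho_0^2\,\d x<\infty$, the initial quantity being finite because $\mu_0\in\pb_1^r(\R^d)$. A second energy estimate, now differentiating \eqref{f-5} in $x_k$, multiplying by $2w\,\partial_{x_k}\rho_t$ and summing over $k$, controls $\int w|D\rho_t|^2$: the leading term is again good, the $b$-terms generate $\int \partial_{x_k} b\,\rho_t\,\partial_{x_k}(w\rho_t)$ and $\int b\,\partial_{x_k}\rho_t\,\partial_{x_k}(w\,\partial_{x_k}\rho_t)$, which via \eqref{f-5.1} (Lipschitz in $x$, giving $|Db|\le\tilde K_1$) and Cauchy--Schwarz are absorbed into the good term plus $C(\int w\rho_t^2+\int w|D\rho_t|^2)$; Gr\"onwall again closes the loop using the bound on $\int w\rho_t^2$ just obtained. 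Finally, the $\pb_1$-membership is immediate since $\int|x|\rho_t\,\d x \le (\int |x|^2 w^{-1}\d x)^{1/2}(\int w\rho_t^2\,\d x)^{1/2}<\infty$ by \eqref{f-5.0}-type reasoning and $w(x)\ge c|x|^2$ for large $|x|$.

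The main obstacle is that these energy estimates are \emph{formal}: they presuppose enough decay/regularity of $\rho_t$ to justify the integrations by parts and to discard boundary terms at infinity, which is exactly what we are trying to prove. The clean way to handle this is a truncation/approximation argument: replace $w$ by $w_n = w\wedge n$ or by $w\,\chi_n$ with a cutoff $\chi_n$ supported in $B_{2n}$, run the estimates with the truncated weight (where all integrals are manifestly finite by the parabolic regularity and local integrability of $\rho_t$), check that the extra commutator terms coming from $\nabla\chi_n$ are either nonpositive or controlled uniformly in $n$ (they are supported on the annulus $\{n\le|x|\le 2n\}$ and carry a factor $w$ that we already control at level $n$), and then pass to the limit $n\to\infty$ by monotone convergence. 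One should also note that the drift depends on $\rho_t$ itself through $b(t,x,\rho_t\d x,\alpha_t)$, but since \eqref{f-5.3} makes this dependence irrelevant for the bounds (only $\|b\|_\infty\le\tilde K_2$ and $|Db|\le\tilde K_1$ enter), no fixed-point/nonlinearity complication arises; the Fokker--Planck equation can be treated as a linear parabolic equation with a fixed bounded Lipschitz-in-$x$ drift along each trajectory. A minor technical check is the $\mathcal{C}^1$-regularity claim of the Lemma's conclusion: for $t>s$ this follows from interior parabolic regularity, and at $t=s$ it holds by assumption since $\mu_0\in\pb_1^r(\R^d)$.
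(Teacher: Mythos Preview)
Your weighted $L^2$ energy estimate for $\rho_t$ is fine---it uses only $\|\tilde b\|_\infty\le\tilde K_2$, and the Gr\"onwall argument closes. The real gap is in the gradient step. You appeal to \eqref{f-5.1} to conclude $|Db|\le\tilde K_1$, but that Lipschitz bound holds for $x\mapsto b(t,x,\rho,\alpha)$ with $\alpha$ \emph{fixed}. In the Fokker--Planck equation \eqref{f-5} the drift that actually appears is the effective one $\tilde b(t,x)=b\bigl(t,x,\mu_t,F_t(x)\bigr)$, where by Definition~\ref{def-1}(iv) the feedback map $F_t:\R^d\to\pb(U)$ is merely measurable, and $(\mathbf{A}_1)$ imposes no regularity of $b$ in the control variable $\alpha$. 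Hence $\tilde b$ is in general only bounded measurable in $x$; differentiating \eqref{f-5} in $x_k$ and invoking $|\partial_{x_k}\tilde b|\le\tilde K_1$ is unjustified. Without that bound the drift contribution in your second energy identity cannot be absorbed: after the available integrations by parts one is left either with a $\partial_{x_k}\tilde b$ factor or with third derivatives of $\rho$, neither of which the good term $-\int w\langle A D\partial_k\rho, D\partial_k\rho\rangle$ controls.

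The paper's proof sidesteps this entirely by using only boundedness of the drift. After decoupling, it invokes the Gaussian heat-kernel estimates of \cite{Zh97} (two-sided bounds on $p(s,x;t,y)$ and a pointwise bound on $D_y p$, valid for bounded measurable drift), writes $\rho_t(y)=\int p(s,x;t,y)\rho_0(x)\,\d x$, and then estimates $\int\e^{|y|}\rho_t(y)^2\,\d y$ and $\int\e^{|y|}|D_y\rho_t(y)|^2\,\d y$ directly by H\"older's inequality and the change of variables $z=y-x$, using $\rho_0\in\mathscr{D}_1^r$ to close. If you wish to keep an energy-estimate flavour, note that your first estimate already yields $\int_s^T\!\int w|D\rho_t|^2\,\d x\,\d t<\infty$ (from the good gradient term before you discard it), so $\int w|D\rho_t|^2<\infty$ for a.e.\ $t$; but upgrading this to \emph{every} $t$ without spatial regularity of $\tilde b$ seems to require precisely the kernel bounds the paper uses.
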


\begin{proof}
  Let $(X_t)_{t\in [s,T]}$ be a controlled process corresponding to a feedback control $\Theta\in \Pi_{s,\mu_0}$ in the form $\alpha_t=F_t(X_t)$, that is,
  \[\d X_t=b(t,X_t,\mu_t,F_t(X_t))\d t +\sigma \d W_t,\]
  where $\mu_t=\law_{X_t}$. It is standard to show that  $\E|X_t|<\infty$ due to $(\mathbf{A}_1)$, which implies that   $\law_{X_t}$  belongs to $\pb_1(\R^d)$. $(\mathbf{A}_2)$ means that $\law_{X_t}$ admits a density $\rho_t\in \mathcal{C}^1(\R^d)$.  Using the idea of decoupling method, $(X_t)$ is also the solution to the following time-inhomogeneous SDE
  \[\d \wt{X}_t=b(t,\wt{X}_t,\mu_t,F_t(\wt{X}_t))\d t+\sigma \d W_t,\quad \wt X_s=X_s,\]
  for which we can apply the known results on heat kernel estimates for time-inhomogeneous diffusion processes.
  Let $p(s,x;t,y)$ denote the transition probability of the Markov process $(\wt X_t)$. According to \cite[Theorem A]{Zh97}, under conditions  $(\mathbf{A}_1)$ and $(\mathbf{A}_2)$, there exist constants $\kappa_1,\kappa_2>0$, independent of the choice of $\Theta $, such that
\begin{gather}\label{heat-1}
  \frac{1}{\kappa_1(t-s)^{d/2}}\exp\Big( \!-\frac{|x-y|^2}{\kappa_2(t-s)} \Big)\leq p(s,x;t,y)\leq \frac{\kappa_1}{(t-s)^{d/2}} \exp\Big(\! -\kappa_2 \frac{|x-y|^2}{t-s}\Big),\\ \label{heat-2}
  |D_yp(s,x;t,y)|\leq \frac{\kappa_1}{(t-s)^{(d+1)/2}}\exp\Big( -\kappa_2\frac{|x-y|^2}{t-s}\Big)
\end{gather} for all $x,y\in \R^d$, $0<t-s\leq T$,
  where the condition that the drift satisfies condition K  in \cite{Zh97}  is ensured by the boundedness of $b$ due to $(\mathbf{A}_1)$.
  Then, the law of $X_t$ coincides with the law of $\wt X_t$ and can be expressed by
  \begin{equation}\label{heat-3}
  \rho_t(y)=\int_{\R^d}\!p(s,x;t,y)\rho_0(x)\d x.
  \end{equation}
  As $\rho_0\in \mathscr{D}_1^r$, applying \eqref{heat-1}, \eqref{heat-3} and H\"older's inequality,
  \begin{align*}
    \int_{\R^d}\! \e^{|y|} \rho_t(y)^2 \d y&\leq \frac{\kappa_1^2}{(t-s)^{d }}\Big(
    \int_{\R^d}\e^{-\kappa_2\frac{|x-y|^2}{t-s}}\d x\Big)  \int_{\R^d}\int_{\R^d}\e^{|y|}  \e^{-\kappa_2\frac{|x-y|^2}{t-s}}\rho_0(x)^2\d y \d x\\
    &=\frac{\kappa_1^2C_1}{(t-s)^{d }}\int_{\R^d}\int_{\R^d}\e^{|x+z|} \e^{-\kappa_2\frac{|z|^2}{t-s}}\rho_0(x)^2\d z \d x\\
    &\leq \frac{\kappa_1^2C_1}{(t-s)^{d }}\int_{\R^d}\e^{|z|- \kappa_2\frac{|z|^2}{t-s}} \d z\int_{\R^d} \e^{|x|}\rho_0(x)^2\d x <\infty,
  \end{align*} where $C_1=\int_{\R^d}\e^{-\kappa_2\frac{|x-y|^2}{t-s}}\d x <\infty$.
  Similarly, we can check that $\int_{\R^d}\e^{|y|} |D\rho_t(y)|^2\d y<\infty$.
  This means that for every $t\in (s,T]$, $\law_{X_t}$ lies in $\pb_1^r(\R^d)$ as desired.
\end{proof}

\begin{mydefn}\label{def-7}
A function $\psi:[0,T]\!\times \!\mathscr{D}_1^r \to \R$ is said to be in $C^{1,1}([0,T]\!\times\!\mathscr{D}_1^r )$ if \begin{itemize}
\item[$(i)$] $\psi(\cdot,\rho)$ is continuously differentiable for every $\rho\in\!\mathscr{D}_1^r$;
\item[$(ii)$] for each $t\in [0, T]$, $\psi(t,\cdot)$ has Mortensen's derivative $\big(\frac{\delta \psi(t,\rho)}{\delta \rho(x)},\frac{\delta \psi(t,\rho)}{\delta \rho'(x)}\big)$ at every point $\rho\in \!\mathscr{D}_1^r$, which satisfies
    that for every $t\in [0,T]$, $i=1,\ldots, d$,  and $\rho\in \mathscr{D}_1^r$,
\begin{gather*}
\int_{\R^d}\! \Big|\frac{\partial}{\partial x_i}\Big(\frac{\delta \psi(t,\rho)}{\delta \rho(x)}\Big)\Big|^2\gamma(x)^{-1}\d x<\infty,\\ \lim_{\|\tilde \rho-  \rho\|_{H_1^2(\gamma)}\to 0}\int_{\R^d}\! \Big| \frac{\partial }{\partial x_i }\Big(\frac{\delta \psi(t,\rho)}{\delta \rho(x)}\Big)-\frac{\partial }{\partial x_i }\Big(\frac{\delta \psi(t,\tilde \rho)}{\delta \rho(x)}\Big)\Big|^2 \gamma(x)^{-1}\d x=0.
\end{gather*}
\end{itemize}
\end{mydefn}

Consider the HJB equation
\begin{equation}\label{f-7}
\left\{\!\begin{array}{l}
   -\partial_t u(t,\rho)-\!\inf\limits_{\alpha\in \pb(U)} \!\! \mathcal{H}\big(t,\rho, \frac{\delta u(t,\rho)}{\delta \rho(x)},\alpha\big) =0,    \quad t\in\! [0,T),\rho  \in\! \mathscr{D}_1^r, \\
   u(T, \rho)=\int_{\R^d} g(x,\rho)\rho(x)\d x,
\end{array}\right.
\end{equation} where
\begin{equation}\label{f-7.5}
\begin{split}
 \mathcal{H}\Big(t,\rho, \frac{\delta u(t,\rho)}{\delta \rho(x)},\alpha\Big) =& \int_{\R^d}\sum\limits_{i=1}^d b_i(t,x,\rho,\alpha)\frac{\partial}{\partial x_i}\Big(\frac{\delta u(t,\rho)}{\delta \rho(x)}\Big) \rho(x)\d x\\
   & -\frac 12\int_{\R^d}\! \sum\limits_{i,j=1}^d\!a_{ij}\frac{\partial }{\partial x_j }\Big(\frac{\delta u(t,\rho)}{\delta \rho(x)}\Big)  \frac{\partial \rho(x) }{\partial x_i}\d x \!+\!\! \int_{\R^d}\! f(t, x, \rho,\alpha) \rho(x)\d x.
\end{split}
\end{equation}
Now, let us introduce the definition of viscosity solution  to the HJB equation \eqref{f-7}.

\begin{mydefn}\label{def-5}
Let $u:[0,T]\times \! \mathscr{D}_1^r \to \R$ be a continuous function.
\begin{itemize}
  \item[$(i)$] $u$ is called a viscosity subsolution to \eqref{f-7} if $u(T,\rho)=\int_{\R^d}g(x,\rho)\rho(x)\d x$, and
  \begin{align}\label{e-2}
    &-\!\partial_t \psi(t_0, \rho_{0} )\!-\!\inf_{\alpha\in \pb(U)}
    \mathcal{H}\Big(t_0,\rho_{0}, \frac{\delta \psi(t_0,\rho_{0} )}{\delta \rho  (x)},\alpha\Big)
    \leq 0
  \end{align}
 for all $\psi\in\! C^{1,1}([0,T)\!\times\! \mathscr{D}_1^r)$ and all $(t_0,\rho_0) $ being a maximum point of $u-\psi$ over $[0,T)\times\! \mathscr{D}_1^r$.

  \item[$(ii)$] $u$ is called a viscosity supersolution to \eqref{f-7} if $u(T,\rho)=\int_{\R^d}g(x,\rho)\rho(x)\d x$, and
      \begin{align}\label{e-3}
    &-\partial_t \psi(t_0, \rho_{0} )-\inf_{\alpha\in \pb(U)} \mathcal{H}\Big(t_0,\rho_{0}, \frac{\delta \psi(t_0,\rho_{0} )}{\delta \rho  (x)},\alpha\Big)\geq 0
  \end{align}
  for all $\psi\in\! C^{1,1}([0,T)\!\times\! \mathscr{D}_1^r )$ and all $(t_0,\rho_0)$ being a minimum point of $u-\psi$ over $[0,T)\times\! \mathscr{D}_1^r$.
  \item[(iii)] If $u$ is both a viscosity subsolution and a viscosity supersolution to \eqref{f-7}, then $u$ is called a viscosity solution to \eqref{f-7}.
\end{itemize}
\end{mydefn}

\begin{mylem}\label{lem-6}
Assume $(\mathbf{A}_1)$-$(\mathbf{A}_3)$  hold, then for any initial distribution  $\mu_{t_0}(\d x)\!=\!\rho_{t_0}(x)\d x\!\in \! \pb_1^r(\R^d)$, $t_0\in [0,T)$,  any $\Theta\in \Pi_{t_0,\mu_{t_0}}$, the associated controlled stochastic process $(X_t)$ satisfies
\begin{gather}\label{f-6}
\lim_{s\to t} \|\rho_s -\rho_t \|_{L^2(\gamma)}+\|D \rho_s-D \rho_t\|_{L^2(\gamma)}=0,
\end{gather} and the convergence is uniform  w.r.t. $\Theta\in \Pi_{t_0,\mu_{t_0}}$,
where  $\rho_t$ stands for the   density of $\law_{X_t}$.
\end{mylem}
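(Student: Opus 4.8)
\section*{Proof proposal for Lemma \ref{lem-6}}

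Throughout, fix $\Theta\in\Pi_{t_0,\mu_{t_0}}$ and, as in the proof of Lemma \ref{lem-7}, put $b_r(\cdot):=b(r,\cdot,\law_{X_r},F_r(\cdot))$, which by $(\mathbf{A}_1)$ is bounded by $\tilde K_2$ and measurable; then $(X_t)$ solves the time-inhomogeneous SDE $\d X_t=b_t(X_t)\,\d t+\sigma\,\d W_t$ and $\rho_t(y)=\int_{\R^d}p(t_0,w;t,y)\rho_{t_0}(w)\,\d w$, $p$ being its fundamental solution. The plan is to combine the Duhamel representation of $\rho_t$ relative to the Gaussian semigroup $T_\tau h:=g_\tau* h$ generated by $\mathcal L_0:=\tfrac12\sum_{i,j}a_{ij}\partial^2_{x_ix_j}$,
\begin{equation*}
\rho_t=T_{t-t_0}\rho_{t_0}-\int_{t_0}^t (Dg_{t-r})*(b_r\rho_r)\,\d r,
\end{equation*}
whose iteration yields the parametrix series $\rho_t=T_{t-t_0}\rho_{t_0}+\sum_{k\ge1}(-1)^k I_k(t)$ (each $I_k(t)$ a $(k{+}1)$-fold space/time convolution of $g$ and $Dg$ kernels with the $k$ bounded factors $b_{r_j}$ and the innermost factor $T_{r_k-t_0}\rho_{t_0}$), together with \eqref{heat-1}--\eqref{heat-2} and their time-regularity counterparts for $p$ and $D_yp$ (available, like \eqref{heat-2} itself, for kernels of diffusions satisfying condition K; cf.\,\cite{Zh97}). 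The basic analytic input is a weighted-convolution estimate: since $\gamma(x)\le c\,\e^{|x|}$ and $\sup_{0<\tau\le T}\int_{\R^d}g_\tau(z)\e^{|z|}\,\d z<\infty$, one has $\|g_\tau* h\|_{L^2(\gamma)}\le C_\gamma\|h\|_{L^2(\gamma)}$ and $\||Dg_\tau|* h\|_{L^2(\gamma)}\le C_\gamma\tau^{-1/2}\|h\|_{L^2(\gamma)}$ uniformly in $\tau\in(0,T]$ (likewise for the Gaussians appearing in \eqref{heat-1}--\eqref{heat-2}); and $\rho_{t_0}\in\mathscr D_1^r$ gives $\rho_{t_0},D\rho_{t_0}\in L^2(\gamma)$ by \eqref{f-0}.

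First I would record that the ``free'' part is continuous: the uniform bound above and density of $C_c^\infty(\R^d)$ in $L^2(\gamma)$ give strong continuity of $(T_\tau)$ on $L^2(\gamma)$, i.e.\,$\|T_\tau h-h\|_{L^2(\gamma)}\to0$ as $\tau\downarrow0$, hence $\|T_{t-t_0}h-T_{s-t_0}h\|_{L^2(\gamma)}\to0$ as $t\to s$ for $t,s\ge t_0$ (semigroup property); applied to $h=\rho_{t_0}$ and, since $T_\tau$ commutes with $D$, to $h=D\rho_{t_0}$, this shows $t\mapsto T_{t-t_0}\rho_{t_0}$ is continuous from $[t_0,T]$ into $H_1^2(\gamma)$, with $\Theta$-free modulus.

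Next, the $L^2(\gamma)$-continuity of $\rho_t$ and the $H_1^2(\gamma)$-continuity at interior times. For the remainder $R_t:=\rho_t-T_{t-t_0}\rho_{t_0}=\sum_{k\ge1}(-1)^kI_k(t)$, estimating term by term with the convolution bound (absorbing one factor $(\cdot)^{-1/2}$ at each occurrence of $Dg$ and using $\int_{t_0<r_k<\dots<r_1<t}\prod_j(r_{j-1}-r_j)^{-1/2}\,\d\vec r=\Gamma(\tfrac12)^k\Gamma(\tfrac k2+1)^{-1}(t-t_0)^{k/2}$, $r_0:=t$) gives $\|I_k(t)\|_{L^2(\gamma)}\le(C\tilde K_2)^k\Gamma(\tfrac k2+1)^{-1}(t-t_0)^{k/2}\|\rho_{t_0}\|_{L^2(\gamma)}$, hence $\sup_{[t_0,T]}\|\rho_t\|_{L^2(\gamma)}\le C\|\rho_{t_0}\|_{L^2(\gamma)}$ and $\|R_t\|_{L^2(\gamma)}\le C(t-t_0)^{1/2}\e^{C(t-t_0)}\|\rho_{t_0}\|_{L^2(\gamma)}\to0$ as $t\downarrow t_0$; with the previous paragraph this proves $\|\rho_s-\rho_{t_0}\|_{L^2(\gamma)}\to0$. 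For an interior $t\in(t_0,T]$ and $s$ near $t$, I would restart from a fixed $t_1\in(t_0,t)$: by \eqref{heat-2} and the convolution bound, $\|D\rho_{t_1}\|_{L^2(\gamma)}\le C(t_1-t_0)^{-1/2}\|\rho_{t_0}\|_{L^2(\gamma)}$, so $\rho_{t_1}\in H_1^2(\gamma)$ with norm bounded uniformly in $\Theta$; writing $\rho_\tau(y)=\int p(t_1,z;\tau,y)\rho_{t_1}(z)\,\d z$ for $\tau\ge t_1$ and using the time-regularity estimates for $p$ and $D_yp$, whose singular prefactors are bounded since $\tau-t_1$ stays bounded below near $t$, yields $\|\rho_t-\rho_s\|_{L^2(\gamma)}+\|D\rho_t-D\rho_s\|_{L^2(\gamma)}\le C|t-s|^\theta\to0$. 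Every constant here depends only on $d,T,A=\sigma\sigma^\ast,\tilde K_2$ and the fixed $\rho_{t_0}$, so the convergence is uniform in $\Theta$.

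The remaining, genuinely delicate point is the continuity of $D\rho_t$ at the initial time, $\|DR_t\|_{L^2(\gamma)}\to0$ as $t\downarrow t_0$. Naively differentiating $I_k(t)$ in $y$ puts a second derivative on the outer Gaussian, producing a non-integrable factor $(t-r)^{-1}$, and one cannot move this extra derivative by parts onto the drift, since $w\mapsto b_r(w)$ is only bounded (the feedback $F_r:\R^d\to\pb(U)$ carries no spatial regularity). My approach would be to reorganise the parametrix so that the spatial derivatives produced by the successive drift operators always point towards the $w$-end of the chain, so that upon integrating the $w$-variable against $\rho_{t_0}$ they are absorbed by the fixed $C^1$ datum --- $\int(D_zp_0)(s-t_0,z-w)\rho_{t_0}(w)\,\d w=(g_{s-t_0}* D\rho_{t_0})(z)$, bounded in $L^2(\gamma)$ with no singular factor; with this bookkeeping each term of the expansion for $DR_t$ retains at most one factor $(t-r)^{-1/2}$, the iterated time integrals again contribute $\Gamma(\tfrac k2+1)^{-1}(t-t_0)^{k/2}$, and summing gives $\|DR_t\|_{L^2(\gamma)}\le C(t-t_0)^{1/2}\e^{C(t-t_0)}\|D\rho_{t_0}\|_{L^2(\gamma)}\to0$, with $\Theta$-free constants. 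Carrying out this reorganisation rigorously --- keeping track, order by order, of where each derivative sits while never letting it fall on $b_r$ --- is the main obstacle of the lemma; once it is done, combining with the two preceding paragraphs gives \eqref{f-6} with modulus uniform in $\Theta$.
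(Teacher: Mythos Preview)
Your route differs substantially from the paper's. The paper never unfolds $p$ into a Duhamel/parametrix series. It keeps the representation $\rho_t(y)=\int p(t_0,x;t,y)\rho_{t_0}(x)\,\d x$, invokes the classical bound
\[
\big|\partial_t^{m_0}\partial_y^{m}\, p(t_0,x;t,y)\big|\le C_1(t-t_0)^{-(d+2m_0+|m|)/2}\exp\Big(-C_2\frac{|y-x|^2}{t-t_0}\Big)
\]
from \cite[Chapter 9, Theorem 8]{Fri} (available here since the drift is bounded), writes $\rho_t-\rho_s=\int\!\int_s^t\partial_r p\,\d r\,\rho_{t_0}\,\d x$ and $D\rho_t-D\rho_s=\int\!\int_s^t\partial_rD_y p\,\d r\,\rho_{t_0}\,\d x$, and estimates the $L^2(\gamma)$-norms directly: Jensen against $\rho_{t_0}(x)\,\d x$, Cauchy--Schwarz in $r$, and the change of variables $y=x+\sqrt{r-t_0}\,z$. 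The outcome is an explicit bound $C(t-s)\big((s-t_0)^{-q}-(t-t_0)^{-q}\big)$ with constants depending only on $d,T,A,\tilde K_2,\rho_{t_0}$, hence uniform in $\Theta$. Your ``restart from $t_1$'' paragraph \emph{is} this computation; the parametrix machinery surrounding it is additional.

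The step in your proposal that I do not see how to carry out is the ``reorganisation'' for $\|DR_t\|_{L^2(\gamma)}\to0$ at $t=t_0$. Already for the first term,
\[
D I_1(t)=\int_{t_0}^{t}\big(D^2 g_{t-r}\big)*\big(b_r\,T_{r-t_0}\rho_{t_0}\big)\,\d r,
\]
moving one spatial derivative from the outer kernel towards the datum $\rho_{t_0}$ forces an integration by parts in the intermediate spatial variable, and that integration by parts necessarily lands a derivative on the factor $b_r(\cdot)=b(r,\cdot,\law_{X_r},F_r(\cdot))$; but the feedback map $F_r:\R^d\to\pb(U)$ carries no spatial regularity, so $b_r$ is only bounded measurable. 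The phrase ``never letting it fall on $b_r$'' is precisely the point that needs a mechanism, and none is given. If you want the endpoint behaviour, the cleaner path is the paper's: work with the full transition kernel $p$ and the ready-made bound on $\partial_rD_yp$, where the drift has already been absorbed into the Gaussian estimate, rather than re-expanding into a series in which $b_r$ reappears as a bare multiplicative factor.
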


\begin{proof}
  Recall the representation of $\rho_t$ based on the application of decoupling method given by \eqref{heat-3}, that is,
  \[\rho_t(y)=\int_{\R^d}\! p(t_0,x; t,y)\rho_{t_0}(x)\d x.\]
Under $(\mathbf{A}_1)$, $(\mathbf{A}_2)$, $p(t_0,x;t,y)$ admits Gaussian bound estimates (cf. \cite[Chapter 9, Theorem 8]{Fri}): there exist constants $C_1,C_2>0$ such that for nonnegative integers $m_0$, $m=(m_1,\ldots,m_k)$,
\begin{equation}\label{ff-1}
\big|\partial_t^{m_0}\partial_y^m p(t_0,x; t, y)\big|\leq C_1(t-t_0)^{-(d+2m_0+|m|)/2} \exp\Big\{-C_2\frac{|y-x|^2}{t-t_0}\Big\}.
\end{equation}
Note that $C_1,C_2$ do not depend on the choice of $\Theta\!\in \!\Pi_{t_0,\mu_{t_0}}$.
Based on this estimate, direct calculation yields
\begin{align*}
  &\int_{\R^d}|\rho_t(y)-\rho_s(y)|^2\gamma(y)\d y\\
  &\leq \int_{\R^d}\!\int_{\R^d}\! (p(t_0,x;t,y)-p(t_0,x;s,y))^2\rho_{t_0}(x)\gamma(y)\d x\d y\\
  &\leq \int_{\R^d}\!\int_{\R^d}\!\Big(\int_s^t\partial_r p(t_0,x;r,y)\d r\Big)^2\rho_{t_0}(x)\gamma(y)\d x\d y\\
  &\leq C_1^2(t\!-\!s)\int_{\R^d}\!\int_{\R^d}\!\int_s^t \frac{1}{(r-t_0)^{ d +1}}\e^{-2C_2 \frac{|y-x|^2}{r-t_0}} \rho_{t_0}(x)\gamma(y) \d r \d x \d y\\
  &\leq C_1^2(t\!-\!s)\int_{\R^d}\!\int_{\R^d}\!\int_s^t \frac 1{(r-t_0)^{\frac d2+1}} \e^{-2C_2|z|^2} \rho_{t_0}(x) \gamma (x\!+\!\sqrt{r\!-\!t_0} z)\d r \d x \d z\\
  &\leq C(t\!-\!s) \big((s\!-\!t_0)^{-\frac d2}\!-\!(t\!-\!t_0)^{-\frac d2}\big)\int_{\R^d}\! \rho_{t_0}(x)\gamma(x)\d x \int_{\R^d}\!\gamma\big(\sqrt{T\!-\!t_0}|z|\big) \e^{-2C_2|z|^2}\d z\\
  &\leq C(t\!-\!s) \big((s\!-\!t_0)^{-\frac d2}\!-\!(t\!-\!t_0)^{-\frac d2}\big).
\end{align*}
Analogously, using H\"older's inequality,
\begin{align*}
  &\int_{\R^d}\!|D\rho_t(y)-D\rho_s(y)|^2\gamma(y)\d y\\
  &\leq (t\!-\!s) \int_{\R^d}\!\int_{\R^d}\!\int_s^t\!|\partial_r\partial_y p(t_0,x;r,y)|^2 \rho_{t_0}(x)\gamma(y)\d r\d x\d y\\
  &\leq C_1^2(t\!-\!s)\int_{\R^d}\!\int_{\R^d}\! \frac{1}{(r-t_0)^{d+3}}\e^{-2C_2\frac{|y-x|^2}{r-t_0}} \rho_{t_0}(x)\gamma(y)\d r\d x\d y\\
  &\leq C(t\!-\!s)\big((s\!-\!t_0)^{-\frac{d}2 -2}\!-\!(t\!-\!t_0)^{-\frac  d2 -2}\big) \int_{\R^d}\!\rho_{t_0}(x)\gamma(x)\d x\int_{\R^d}\! \gamma(\sqrt{T\!-\!t_0}|z|)\e^{-2C_2|z|^2}\d z\\
  &\leq  C(t\!-\!s)\big((s\!-\!t_0)^{-\frac d2 -2}\!-\!(t\!-\!t_0)^{-\frac d2 -2}\big).
\end{align*}
Therefore, we obtain the conclusion \eqref{f-6}.
\end{proof}

\begin{mythm}\label{thm-3}
Suppose $(\mathbf{A}_1)$-$(\mathbf{A}_3)$  hold, then the value function $V(s,\rho)$ on $[0,T]\times \!\mathscr{D}_1^r$ of the optimal control problem \eqref{a-4} is a viscosity solution to HJB equation \eqref{f-7} in the sense of Definition \ref{def-5} and satisfies the continuity property \eqref{cont}.
\end{mythm}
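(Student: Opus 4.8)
The plan is to prove the two assertions separately: continuity of $V$ on $[0,T]\times\mathscr{D}_1^r$ with respect to the norm suggested by Lemma \ref{lem-6}, and the viscosity sub/super-solution property. For the continuity, the key input is Proposition \ref{lem-2}, which gives $|V(s,\mu)-V(s',\mu')|\le C(\sqrt{|s'-s|}+\W_1(\mu,\mu'))$, combined with the estimate \eqref{f-5.0}, namely $\W_1(\rho\,\d x,\tilde\rho\,\d x)\le\kappa_4\|\rho-\tilde\rho\|_{L^2(\gamma)}$. Since conditions $(\mathbf{A}_1)$--$(\mathbf{A}_3)$ imply (H1)--(H3) (the Lipschitz constants being compatible via \eqref{f-5.0}) and since $(\mathbf{A}_2)$ is exactly the hypothesis ``$\sigma$ depends only on $t,x$'' of Proposition \ref{lem-2}, the continuity property \eqref{cont} transfers directly, and in particular $V$ is continuous on $[0,T]\times\mathscr{D}_1^r$ under the $H_1^2(\gamma)$-topology used in Definition \ref{def-5}.

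For the viscosity property, I would follow the classical route (cf. \cite{CL,YZ99}) adapted to Mortensen's calculus. \emph{Subsolution:} let $\psi\in C^{1,1}([0,T)\times\mathscr{D}_1^r)$ and let $(t_0,\rho_0)$ be a maximum point of $V-\psi$; without loss of generality $(V-\psi)(t_0,\rho_0)=0$, so $V\le\psi$ near $(t_0,\rho_0)$ with equality at $(t_0,\rho_0)$. Fix an arbitrary constant control $\alpha_r\equiv\alpha\in\pb(U)$ (admissible by the discussion after (H3)), run the controlled process $(X_r)_{r\ge t_0}$ from $\law_{X_{t_0}}=\rho_0\,\d x$, and write $\rho_r$ for its density; by Lemma \ref{lem-7}, $\rho_r\in\mathscr{D}_1^r$ for all $r\in[t_0,T]$. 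The dynamic programming principle (Proposition \ref{lem-3}) gives, for $h>0$ small,
\[
\psi(t_0,\rho_0)=V(t_0,\rho_0)\le \int_{t_0}^{t_0+h}\!\!\int_{\R^d} f(r,x,\rho_r,\alpha)\rho_r(x)\,\d x\,\d r+V(t_0+h,\rho_{t_0+h})\le \int_{t_0}^{t_0+h}\!\!\int_{\R^d} f\,\rho_r\,\d x\,\d r+\psi(t_0+h,\rho_{t_0+h}).
\]
Then I expand $\psi(t_0+h,\rho_{t_0+h})-\psi(t_0,\rho_0)$ along the flow: the $t$-derivative contributes $\partial_t\psi(t_0,\rho_0)h+o(h)$ by Definition \ref{def-7}(i), and the $\rho$-increment, via the Fréchet differential \eqref{f-1}--\eqref{f-4} with $\phi=\rho_{t_0+h}-\rho_0$, contributes $\int_{t_0}^{t_0+h}\delta\psi(t_0,\rho_0;\partial_r\rho_r)\,\d r+o(h)$; here $\partial_r\rho_r=\mathcal{L}_\alpha^\ast\rho_r$ by the Fokker--Planck equation \eqref{f-5}. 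Substituting $\mathcal{L}_\alpha^\ast\rho_r$ into the representation \eqref{f-3} of $A=\delta\psi(t_0,\rho_0;\cdot)$ and integrating by parts — moving the divergence-form operator onto $\delta\psi/\delta\rho$ and using the decay of $\gamma^{-1}$-weighted derivatives in Definition \ref{def-7}(ii) to justify boundary terms vanishing — reproduces exactly $-\mathcal{H}(t_0,\rho_0,\delta\psi/\delta\rho(x),\alpha)$ plus the $f$-term cancels against the running cost. Dividing by $h$ and letting $h\downarrow0$ yields $-\partial_t\psi(t_0,\rho_0)-\mathcal{H}(t_0,\rho_0,\delta\psi/\delta\rho,\alpha)\le0$; taking the infimum over $\alpha\in\pb(U)$ gives \eqref{e-2}. \emph{Supersolution:} symmetrically, if $(t_0,\rho_0)$ is a minimum point of $V-\psi$, then for every $\varepsilon>0$ pick a near-optimal control in the DPP achieving the infimum up to $\varepsilon h$, run the associated flow (again in $\mathscr{D}_1^r$ by Lemma \ref{lem-7}), expand $\psi$ along it as above, divide by $h$, let $h\downarrow0$ and then $\varepsilon\downarrow0$; the near-optimality forces the reverse inequality \eqref{e-3}. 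Uniformity in $\Theta$ from Lemma \ref{lem-6} is what lets the $o(h)$ terms be controlled independently of which control realizes the infimum.

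The main obstacle I expect is the rigorous justification of the chain-rule expansion
$\psi(t_0+h,\rho_{t_0+h})-\psi(t_0,\rho_0)=\partial_t\psi(t_0,\rho_0)h+\int_{t_0}^{t_0+h}\delta\psi(t_0,\rho_0;\partial_r\rho_r)\,\d r+o(h)$: one must show the increments $\rho_r-\rho_0$ are $o(1)$ and in fact $O(r-t_0)$ in the $H_1^2(\gamma)$-norm (so that the Fréchet remainder in \eqref{f-1} is genuinely $o(h)$), that $r\mapsto\partial_r\rho_r=\mathcal{L}_\alpha^\ast\rho_r$ is integrable in a suitable weighted space, and that one may replace $\delta\psi(t_0,\rho_r;\cdot)$ by $\delta\psi(t_0,\rho_0;\cdot)$ up to $o(h)$ — this last point is precisely where the continuity requirement in Definition \ref{def-7}(ii) enters. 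All of these follow from the Gaussian heat-kernel bounds \eqref{heat-1}--\eqref{heat-2} and \eqref{ff-1} and the quantitative estimates already carried out in the proof of Lemma \ref{lem-6}, so the argument is a matter of assembling those pieces carefully rather than introducing new ideas; the boundedness of $b$ and $f$ from $(\mathbf{A}_1)$, $(\mathbf{A}_3)$ keeps every term finite. Once the expansion is in place, passing the infimum over $\alpha$ through the limit is routine because $\pb(U)$ is compact and $\mathcal{H}$ is continuous in $\alpha$.
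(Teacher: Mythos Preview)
Your proposal is essentially correct and follows the same classical route as the paper: continuity via Proposition~\ref{lem-2} together with \eqref{f-5.0}; subsolution by running a constant control $\alpha_t\equiv\alpha$ in the DPP, using $V\le\psi$, the Fokker--Planck equation \eqref{f-5}, and integration by parts before dividing by $h$; supersolution via near-optimal controls in the DPP combined with the uniform-in-$\Theta$ convergence from Lemma~\ref{lem-6}.

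The one place where the paper does more work than your sketch acknowledges is the supersolution step. You argue directly (choose an $\varepsilon h$-optimal control, expand, divide by $h$, let $h\downarrow0$, then $\varepsilon\downarrow0$), whereas the paper argues by contradiction: assuming \eqref{e-3} fails, it first proves that
\[
\rho\ \longmapsto\ \inf_{\alpha\in\pb(U)}\mathcal{H}\Big(t,\rho,\frac{\delta\psi(t,\rho)}{\delta\rho(x)},\alpha\Big)
\]
is continuous in the $H_1^2(\gamma)$-topology, so that the strict inequality persists on an $H_1^2(\gamma)$-ball of radius $\tilde\zeta_1$ with a uniform margin $\varepsilon$; then the uniformity in Lemma~\ref{lem-6} traps $\rho_r^n$ in that ball for $r\in[t_0,t_0+\delta_n]$ \emph{for every} near-optimal $\Theta_n$, and integrating gives $\gamma_n/\delta_n\ge\varepsilon$, contradicting $\gamma_n/\delta_n\to0$. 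Your direct approach needs the same continuity statement to pass the limit through $\frac{1}{h}\int_{t_0}^{t_0+h}[\partial_r\psi+\inf_\alpha\mathcal{H}](r,\rho_r^h)\,\d r$, and this is continuity in $\rho$ (through $b$, $f$, $\rho$, $D\rho$, \emph{and} $\delta\psi/\delta\rho$), not merely continuity in $\alpha$ as you write at the end. The paper devotes a full page to verifying this term by term using $(\mathbf{A}_1)$, $(\mathbf{A}_3)$, and Definition~\ref{def-7}(ii); it is the only genuinely nontrivial computation in the proof, so you should not label it ``routine.''
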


\begin{proof}
The assertion that  $V$ satisfies \eqref{cont} can be proved by checking directly the conditions of Proposition \ref{lem-2}.

  \emph{Viscosity subsolution} Let $(t_0,\rho_{t_0})\in [0,T)\times\!\mathscr{D}_1^r $  and $\psi\in C^{1,1}([0,T)\times \!\mathscr{D}_1^r)$ be a test function such that
  \begin{equation}\label{f-8}
  0=(V-\psi)(t_0,\rho_{t_0})=\max\big\{ (V-\psi)(t,\rho); \ (t, \rho)\in [0,T)\times \!\mathscr{D}_1^r\big\}.
  \end{equation}
  Due to Proposition \ref{lem-3},  using the control $\alpha_t\equiv \alpha\in \pb(U)$, we derive that
  \[V(t_0,\rho_{t_0} )\leq \E_{t_0,\mu_{t_0}}\Big[\int_{t_0}^t\!\! f(r,X_r ,\rho_r ,\alpha)\d r+V(t, \rho_t )\Big],\]
  where $\mu_{t_0}(\d x)=\rho_{t_0}(x)\d x$, and $\rho_t$ stands for the density of $\law_{X_t}$. Due to \eqref{f-8}, we get
  \begin{equation}\label{f-9}
  \psi(t,\rho_t )-\psi(t_0,\rho_{t_0} )
  +  \int_{t_0}^t\!\int_{\R^d}\! f(r,x,\rho_r , \alpha)\rho_r (x)\d x \d r \geq 0.
  \end{equation}
  Dividing both sides of \eqref{f-9} by $t-t_0$ and letting $t\downarrow t_0$, using the integration by parts formula, we derive from \eqref{f-5} that
  \begin{align*}
  &-\partial_t \psi(t_0, \rho_{t_0} )-\Big[\int_{\R^d}\!\sum_{i=1}^d b_i(t_0,x,\rho_{t_0},\alpha )\frac{\partial}{\partial x_i}\Big( \frac{\delta \psi(t_0,\rho_{t_0} )}{\delta \rho  (x)}\Big)  \rho_{t_0} (x)\d x \\
  &-\frac 12\int_{\R^d}\!\sum_{i,j=1}^d a_{ij} \frac{\partial }{\partial x_j }\Big(\frac{\delta\psi (t_0,\rho_{t_0})}{\delta\rho(x)}\Big) \frac{\partial \rho_{t_0}(x)}{\partial{x_i}}\d x +\int_{\R^d}\!f(t_0, x, \rho_{t_0} , \alpha)\rho_{t_0}(x)\d x\Big]\leq 0.
  \end{align*}
  Taking the infimum for $\alpha$ over $\pb(U)$ we obtain \eqref{e-2} and  conclude that $V$ is a viscosity subsolution of \eqref{f-7}.

  \emph{Viscosity supersolution} Let $(t_0,\rho_{t_0})\in [0,T)\times\!\mathscr{D}_1^r$ and $\psi\in C^{1,1}([0,T]\times\!\mathscr{D}_1^r)$ such that
  \begin{equation}\label{f-10}
  0=(V-\psi)(t_0,\rho_{t_0})=\min\big\{(V-\psi)(t,\rho);\ (t,\rho )\in [0,T)\times\!\mathscr{D}_1^r\big\}.
  \end{equation}

  Let us first show that the function
  \[\rho\mapsto \inf_{\alpha\in \pb(U)}\!\mathcal{H}\Big(t,\rho,\frac{\delta \psi(t,\rho)}{\delta \rho(x)},\alpha\Big)\]
  is continuous in  $ \mathscr{D}_1^r $ when $ \mathscr{D}_1^r$ is endowed with the weighted Sobolev norm $\|\,\cdot\,\|_{H_1^2(\gamma)}$.

  In fact, by $(\mathbf{A}_3)$, for any    $t\in [0,T]$, $\alpha\in \pb(U)$
  \begin{equation}\label{f-11}
  \begin{aligned}
    &\Big|\int_{\R^d}\! f(t,x,\rho,\alpha)\rho(x)\d x-\int_{\R^d}\! f(t,x,\tilde \rho,\alpha)\tilde \rho(x)\d x\Big|\\
    &\leq  \int_{\R^d}\!|f(t,x,\rho,\alpha)- f(t,x,\tilde \rho,\alpha)|\rho(x)\d x+\int_{\R^d}\!|f(t,x,\tilde \rho, \alpha)||\rho(x)-\tilde \rho(x)|\d x\\
    &\leq \tilde K_3\|\rho-\tilde\rho\|_{L^2(\gamma)} +\tilde K_3\big(\int_{\R^d}\gamma(x)^{-1}\d x\big)\|\rho-\tilde \rho\|_{L^2(\gamma)} .
  \end{aligned}
  \end{equation}

 For  $\psi\in C^{1,1}([0,T] \times\!\mathscr{D}_1^r)$,
  since
  \begin{align*}
    &\Big|\int_{\R^d}\! \frac{\partial}{\partial x_j}\Big(\frac{\delta \psi(t,\rho)}{\delta \rho(x)}\Big)\frac{\partial}{\partial x_i}\rho(x)\d x- \int_{\R^d}\! \frac{\partial}{\partial x_j}\Big(\frac{\delta \psi(t,\tilde \rho)}{\delta \rho(x)}\Big)\frac{\partial}{\partial x_i}{\tilde \rho}(x)\d x\Big|\\
    &\leq \int_{\R^d}\!\!\Big|\frac{\partial}{\partial x_j}\Big(\frac{\delta \psi(t,\rho)}{\delta \rho(x)}\Big)- \frac{\partial}{\partial x_j}\Big(\frac{\delta \psi(t,\tilde \rho)}{\delta \rho(x)}\Big)\Big|\frac{\partial}{\partial x_i}\rho(x) \d x\\
     &\quad + \int_{\R^d}\Big|\frac{\partial}{\partial x_j}\Big(\frac{\delta \psi(t,\tilde \rho)}{\delta \rho(x)}\Big)\Big| \big|\frac{\partial}{\partial x_i}\rho(x)-\frac{\partial}{\partial x_i} \tilde\rho(x)\big|\d x\\
    &\leq  \Big(\int_{\R^d}\!|D\rho(x)|^2\gamma(x)\d x\Big)^{\frac 12} \Big(\int_{\R^d} \Big|\frac{\partial}{\partial x_j}\Big(\frac{\delta \psi(t,\rho)}{\delta \rho(x)}\Big)\!-\!\frac{\partial }{\partial x_j}\Big( \frac{\delta \psi(t,\tilde \rho)}{\delta \rho(x)}\Big)\Big|^2\gamma(x)^{-1} \d x\Big)^{\frac 12}\\
     &\quad \  +\! \Big(\int_{\R^d}\!\Big|\frac{\partial}{\partial x_j}\Big(\frac{\delta \psi(t,\tilde \rho)}{\delta \rho(x)}\Big)\Big|^2\gamma(x)^{-1}\d x\Big)^{\frac 12} \Big(\int_{\R^d} \big|\frac{\partial }{\partial x_i}\rho(x)\!-\!\frac{\partial}{\partial x_i} \tilde \rho(x) \big|^2\gamma(x)\d x\Big)^{\frac 12},
  \end{align*}
  we have, if $ \|\tilde \rho - \rho\|_{H_1^2(\gamma)}\to 0$, then
  \[\int_{\R^d}\sum_{i,j=1}^d a_{ij}\frac{\partial}{\partial x_j}\Big( \frac{\delta \psi(t,\rho)}{\delta \rho(x)}\Big) \frac{\partial}{\partial x_i}\rho(x)\d x - \int_{\R^d}\sum_{i,j=1}^d a_{ij}\frac{\partial}{\partial x_j}\Big( \frac{\delta \psi(t,\tilde \rho)}{\delta \rho(x)}\Big) \frac{\partial}{\partial x_i}\tilde\rho(x)\d x \longrightarrow  0.
  \]
  Moreover, by $(\mathbf{A}_1)$,
  \begin{align*}
    &\Big|\int_{\R^d}\! b_i(t,x,\rho,\alpha)\frac{\partial}{\partial x_i}\Big(\frac{\delta \psi(t,\rho)}{\delta \rho(x)}\Big) \rho(x)\d x -\int_{\R^d}\!b_i(t,x,\tilde \rho,\alpha)\frac{\partial}{\partial x_i}\Big(\frac{\delta\psi(t,\tilde \rho)}{\delta \rho(x)}\Big) \tilde \rho(x)\d x\Big|\\
    &\leq \Big| \int_{\R^d}\!\big(b_i(t,x,\rho,\alpha)-b_i(t,x,\tilde \rho,\alpha)\big)\frac{\partial}{\partial x_i}\Big( \frac{\delta\psi(t,\rho)}{\delta \rho(x)}\Big) \rho(x)\d x\Big|\\
    &\quad +\Big|\int_{\R^d}\! b_i(t,x,\tilde \rho,x)\Big( \frac{\partial}{\partial x_i}\Big( \frac{\delta\psi(t,\rho)}{\delta \rho(x)}\Big)-\frac{\partial}{\partial x_i}\Big( \frac{\delta\psi(t,\tilde \rho)}{\delta \rho(x)}\Big)\Big) \rho(x)\d x\Big|\\
    &\quad + \Big| \int_{\R^d}\! b_i(t,x,\tilde \rho,\alpha)\frac{\partial}{\partial x_i}\Big( \frac{\delta\psi(t,\tilde \rho)}{\delta \rho(x)}\Big) (\rho(x)-\tilde \rho(x)) \d x \Big|\\
    &\leq \tilde {K}_1     \|\rho-\tilde \rho\|_{L^2(\gamma)} \Big(\int_{\R^d}\!\Big|\frac{\partial}{\partial x_i}\Big(\frac{\delta \psi(t,  \rho)}{\delta \rho(x)}\Big)\Big|^2\gamma(x)^{-1}\d x\Big)^{\frac 12}\|\rho\|_{L^2(\gamma)}\\
    &\quad + \tilde K_2 \|\rho\|_{L^2(\gamma)} \Big(\int_{\R^d} \Big|\frac{\partial}{\partial x_j}\Big(\frac{\delta \psi(t,\rho)}{\delta \rho(x)}\Big)\!-\!\frac{\partial }{\partial x_j}\Big( \frac{\delta \psi(t,\tilde \rho)}{\delta \rho(x)}\Big)\Big|^2 \gamma(x)^{-1} \d x\Big)^{\frac 12}\\
    &\quad +\tilde K_2  \Big(\int_{\R^d}\!\Big|\frac{\partial}{\partial x_j}\Big(\frac{\delta \psi(t,\tilde \rho)}{\delta \rho(x)}\Big)\Big|^2\gamma(x)^{-1}\d x\Big)^{\frac 12}   \|\rho-\tilde \rho\|_{L^2(\gamma)}.
  \end{align*}
  This shows the uniform continuity in  $\alpha\in \pb(U)$ of the term
  \begin{align*}
    \rho \mapsto \int_{\R^d}\sum_{i=1}^db_i(t,x,\rho,\alpha) \frac{\partial}{\partial {x_i}} \Big(\frac{\delta\psi(t,\rho)}{\delta \rho(x)}\Big)\rho(x)\d x.
  \end{align*}
  As a consequence,
  invoking the uniform continuity  of these three terms in $\mathcal{H}\big(t,\rho,\frac{\delta\psi(t,\rho)}{\delta \rho(x)}, \alpha\big)$, we can show the continuity of $  \inf_{\alpha\in \pb(U)}\mathcal{H}\big(t,\rho,\frac{\delta\psi(t,\rho)}{\delta \rho(x)}, \alpha\big)$.

  Next, we shall prove
  \begin{equation}\label{f-12}
  -\partial_t\psi(t_0,\rho_{t_0})-\inf_{\alpha\in \pb(U)}\!\mathcal{H}\Big(t_0,\rho_{t_0},\frac{\delta \psi(t_0,\rho_{t_0})}{\delta \rho(x)},\alpha\Big)\geq  0
  \end{equation} by contradiction.
  Suppose that
  \begin{equation}\label{f-13}
  -\partial_t\psi(t_0,\rho_{t_0})-\inf_{\alpha\in \pb(U)}\!\mathcal{H}\Big(t_0,\rho_{t_0},\frac{\delta \psi(t_0,\rho_{t_0})}{\delta \rho(x)},\alpha\Big)<0.
  \end{equation}
  Then, using the continuity of $(t,\rho)\mapsto \partial_t\psi(t,\rho)+\inf_{\alpha\in\pb(U)} \mathcal{H}\big(t,\rho,\frac{\delta \psi(t,\rho)}{\delta\rho(x)},\alpha\big)$ at $(t_0,\rho_{t_0})$, there exist $\veps,\tilde \zeta_1>0$ such that for any $(t, \rho )\in [0,T]\!\times\!\mathscr{D}_1^r$ satisfying $|t-t_0|<\tilde \zeta_1$, $\|\rho-\rho_{t_0}\|_{H_1^2(\gamma)} <\tilde \zeta_1$, it holds
  \begin{equation}\label{f-14}
  -\partial_t\psi(t ,\rho )-\inf_{\alpha\in \pb(U)}\!\mathcal{H}\Big(t ,\rho ,\frac{\delta \psi(t ,\rho )}{\delta \rho(x)},\alpha\Big)\leq -\veps.
  \end{equation}

  For   $\Theta\in \!\Pi_{t_0,\mu_{t_0}}$, denote   by $(X_t )$ the associated controlled stochastic process. By  Lemma \ref{lem-6},   there exists a constant $\tilde \zeta_2>0$, independent the choice $\Theta\in \! \Pi_{t_0,\mu_{t_0}}$, such that for $|t-t_0|\leq \tilde \zeta_2$,
  \begin{equation}\label{f-14.5}
  \|\rho_t- \rho_{t_0}\|_{L^2(\gamma)}+\|D\rho_t- D\rho_{t_0}\|_{L^2(\gamma)}<\tilde \zeta_1.
  \end{equation}

Take two sequence $\delta_n,\gamma_n, n\geq 1$ satisfying $0<\delta_n<\tilde \zeta_1\wedge \tilde \zeta_2$, $\gamma_n>0$ and
\begin{equation}\label{f-15}
   \lim_{n\to \infty} \gamma_n/\delta_n=0.
\end{equation}
By the dynamic programming principle, there exists a sequence of Markovian feedback controls $\Theta_n\in \Pi_{t_0,\mu_0}$ such that
\begin{equation*}
 V(t_0,\rho_{t_0})\geq \E_{t_0,\mu_{t_0}}\Big[\int_{t_0}^{t_0+\delta_n} \!\!f(r,X_r^n,\rho_r^n,\alpha_r^n)\d r+\!V(t_0+\delta_n, \rho_{t_0+\delta_n}^n)\Big]-\gamma_n,
\end{equation*} where $(X_r^n,\alpha_r^n)$ is the stochastic processes associated with the control $\Theta_n$, and $\rho_r^n$ denotes the density of $\law_{X_r^n}$. By virtue of \eqref{f-10},
   \begin{equation}\label{f-16}
   \psi(t_0,\rho_{t_0})\geq \E_{t_0,\mu_{t_0}}\Big[\int_{t_0}^{t_0+\delta_n}\!\! f(r, X_r^n,\rho_r^n,\alpha_r^n)\d r+ \psi(t_0+\delta_n, \rho_{t_0+\delta_n}^n)\Big]-\gamma_n.
   \end{equation}
Since $\psi\in C^{1,1}([0,T]\!\times\!\mathscr{D}_1^r) $ and $\rho_t^n$ satisfies the nonlinear Fokker-Planck equation \eqref{f-5}, we have
\begin{align*}
  \frac{\gamma_n}{\delta_n}&\geq \frac1{\delta_n}\E_{t_0,\mu_0}\Big[\int_{t_0}^{t_0+\delta_n}\!\! f(r,X_r^n,\rho_r^n,\alpha_r^n)\d r+\int_{t_0}^{t_0 +\delta_n} \!\!\frac{\d}{\d r}\big( \psi(r, \rho_r^n)\big)\d r\Big]\\
  &=\frac{1}{\delta_n}\!\int_{t_0}^{t_0+\delta_n} \!\!\! \int_{\R^d}\! \!f(r,x,\rho_r^n,\alpha_r^n) \rho_r^n(x)\d x \d r \\ &\qquad +\!\frac{1}{\delta_n}\!\int_{t_0}^{t_0+\delta_n}\!\! \Big[\partial_r\psi(r,\rho_r^n)\!+\!\!\int_{\R^d}\! \!  \Big(\frac{\delta \psi(r,\rho_r^n)}{\delta\rho(x)}\Big) \mathcal{L}_\alpha^\ast\rho_r^n(x)\d x\Big]\d r\\
  &\geq \frac{1}{\delta_n} \int_{t_0}^{t_0+\delta_n}\!\!\Big[\partial_r\psi(r,\rho_r^n) +\inf_{\alpha\in \pb(U)}\!\mathcal{H}\Big(r,\rho_r^n,\frac{\delta \psi(r,\rho_r^n)}{\delta \rho(x)},\alpha\Big)\Big]\d r\\
  &\geq \frac1{\delta_n}\int_{t_0}^{t_0+\delta_n}\veps \d r \qquad \qquad \qquad \text{(due to  \eqref{f-14}, \eqref{f-14.5})}\\
  &=\veps>0.
\end{align*} Letting $n\to \infty$, this contradicts \eqref{f-15}, and hence the assertion \eqref{f-13} is false. Therefore, $V(t,\mu)$ is a viscosity supersolution to \eqref{f-7}. Invoking the discussion in the first part, $V$ is  further a viscosity solution to \eqref{f-7}.
\end{proof}

\section{Comparison principle for HJB equations}

The space $\pb_1^r(\R^d)$ is an infinite dimensional space, and the bounded sets in $\pb_1^r(\R^d)$ are not necessary precompact. Compared with finite dimensional spaces, this essential  difference make the study of viscosity solution theory for HJB equations on infinite dimensional space more difficult.  To overcome this difficulty, we shall use Borwein-Preiss variational principle (cf. \cite{AF14,BP87}). Another crucial point for us to establish the comparison principle on the Wasserstein space is to find suitable smooth functions, which  on the one hand admit  Mortensen's derivatives, on the other hand can distinguish two different probability measures in $\pb_1^r(\R^d)$. The simplicity of Mortensen's derivative facilitates our construction of necessary smooth functions, which is our initial purpose to introduce Mortensen's derivative.

The $L^1$-Wasserstein distance on $ \pb_1^r(\R^d)$  can induce a distance on  $\mathscr{D}_1^r$ via
\[\mathrm{dist}(\rho,\tilde \rho):=\W_1(\rho(x)\d x,\tilde \rho(x)\d x),\quad \rho,\,\tilde \rho\in \mathscr{D}_1^r.\]
Since HJB equation \eqref{f-7} depends on $D\rho$, the completeness of  $ \pb_1^r(\R^d)$ or $\mathscr{D}_1^r$ relative to $\W_1$ or $\mathrm{dist}(\cdot,\cdot)$ cannot ensure the existence of $D\rho$ for the limit point, and so it is inappropriate to use $\mathrm{dist}(\cdot,\cdot)$.  In this work we consider the completeness of $\mathscr{D}_1^r$ w.r.t.\,the weighted Sobolev norm $\|\,\cdot\,\|_{H_1^2(\gamma)}$, which is denoted by
$\bar{\mathscr{D}}_1^r$ in the sequel.
For $\rho\in \bard$, $\rho$ is in the Sobolev space $H_1^2(\R^d)$ and is not necessary in $\mathcal{C}^1(\R^d)$, but it still holds that
\[\rho\geq 0, \ \int_{\R^d} \rho(x)\d x=1, \ \int_{\R^d}\big(\rho(x)^2+|D\rho(x)|^2\big)\e^{|x|}\d x<\infty.\]
Indeed, if $\rho_n\in\mathscr{D}_1^r$ converges to $\rho$ in the norm $\|\,\cdot\,\|_{H_1^2(\gamma)}$,
\begin{gather*}
\int_{\R^d}\!|\rho_n(x)-\rho(x)|\d x\leq \Big(\int_{\R^d}\!\gamma(x)^{-1}\d x\Big)^{\frac 12} \Big(\int_{\R^d}|\rho_n(x)-\rho(x)|^2\gamma(x)\d x\Big)^{\frac 12},
\end{gather*} which means that $\rho_n$ converges to $\rho$ in $L^1(\R^d)$, and hence    $\rho\geq 0$ and $\int_{\R^d}\rho(x)\d x=1$ as $\rho_n$ are probability densities.

The conditions $(\mathbf{A}_1)$, $(\mathbf{A_3})$ on the coefficients $b,\,f,\,g$ can be continuously extended to the space $\bard$. Correspondingly, the value function and the HJB equation \eqref{f-7} can also be continuously extended to $\bard$. In the following we shall establish a comparison principle for the HJB equation \eqref{f-7} on $\bard\supset \mathscr{D}_1^r$.


\begin{mylem}\label{lem-8}
Define a functional $F$ on $\bar{\mathscr{D}}_1^r$ by
\[F(\rho)=\int_{\R^d}\! \big(  \rho(x)^2+|D\rho(x)|^2\big) \gamma(x)\d x.\]
Then, for $\hat\rho\in \bard$,
\begin{equation}\label{h-0}
\frac{\delta F(\rho-\hat\rho)}{\delta \rho(x)}= 2(\rho(x)-\hat\rho(x))\gamma(x),\quad \frac{\delta F(\rho-\hat\rho)}{\delta \rho'(x)}=2 (D\rho(x)-D\hat{\rho}(x))\gamma(x),
\end{equation}
and $F\in  {C}^{1,1}([0,T]\times \bard)$.
\end{mylem}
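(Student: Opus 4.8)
The plan is to regard $F$ as the quadratic functional $h\mapsto\int_{\R^d}\big(h(x)^2+|Dh(x)|^2\big)\gamma(x)\,\d x$, which is finite and well defined for every $h$ with $\norm{h}_{H_1^2(\gamma)}<\infty$; in particular, since $\rho,\hat\rho\in\bard$ both have finite $\norm{\cdot}_{H_1^2(\gamma)}$-norm, $F$ is defined in a $\norm{\cdot}_{H_1^2(\gamma)}$-neighbourhood of $g:=\rho-\hat\rho$. Expanding the square, for $\phi\in H_1^2(\R^d)$ one gets the exact identity
\[
F(g+\phi)-F(g)=2\int_{\R^d}\big(g(x)\phi(x)+\la Dg(x),D\phi(x)\raa\big)\gamma(x)\,\d x+\norm{\phi}_{H_1^2(\gamma)}^2 ,
\]
so the candidate linear functional is $A\phi:=2\int_{\R^d}\big(g\phi+\la Dg,D\phi\raa\big)\gamma\,\d x$ and the remainder equals $\norm{\phi}_{H_1^2(\gamma)}^2$; dividing by $\norm{\phi}_{H_1^2(\gamma)}$ and letting it go to $0$ gives \eqref{f-1}. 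By Cauchy--Schwarz together with $a_1b_1+a_2b_2\le(a_1^2+a_2^2)^{1/2}(b_1^2+b_2^2)^{1/2}$ one obtains $|A\phi|\le 2\norm{g}_{H_1^2(\gamma)}\norm{\phi}_{H_1^2(\gamma)}$, so $A$ is a \emph{continuous} linear functional on $(H_1^2(\R^d),\norm{\cdot}_{H_1^2(\gamma)})$. Since the reference measure in \eqref{f-3} is the Lebesgue measure, rewriting $A\phi=\int_{\R^d}\big(2g(x)\gamma(x)\,\phi(x)+\la 2Dg(x)\gamma(x),D\phi(x)\raa\big)\d x$ and comparing with \eqref{f-3}--\eqref{f-4} identifies Mortensen's derivative as $\big(2g(x)\gamma(x),\,2Dg(x)\gamma(x)\big)$, i.e.\ \eqref{h-0}.

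For the membership $F\in C^{1,1}([0,T]\times\bard)$ I would check the two items of Definition \ref{def-7}. Item (i) is immediate since $(t,\rho)\mapsto F(\rho-\hat\rho)$ does not depend on $t$, hence $\partial_t F\equiv0$ is continuous; the existence of Mortensen's derivative was just established. For item (ii), note that
$\partial_{x_i}\big(\tfrac{\delta F(\rho-\hat\rho)}{\delta\rho(x)}\big)=2\partial_{x_i}(\rho-\hat\rho)\,\gamma+2(\rho-\hat\rho)\,\partial_{x_i}\gamma$; using $|D\gamma|\le\kappa\gamma$ from \eqref{norm-1} and $\kappa\ge1$,
\[
\Big|\partial_{x_i}\Big(\tfrac{\delta F(\rho-\hat\rho)}{\delta\rho(x)}\Big)\Big|^2\gamma(x)^{-1}\le 8\kappa^2\big(|\partial_{x_i}(\rho-\hat\rho)(x)|^2+|(\rho-\hat\rho)(x)|^2\big)\gamma(x),
\]
whose integral is at most $8\kappa^2\norm{\rho-\hat\rho}_{H_1^2(\gamma)}^2<\infty$, which is the first condition of Definition \ref{def-7}(ii). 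For the continuity condition, $\tfrac{\delta F(\rho-\hat\rho)}{\delta\rho(x)}-\tfrac{\delta F(\tilde\rho-\hat\rho)}{\delta\rho(x)}=2(\rho-\tilde\rho)\gamma$, so the same estimate with $\rho-\tilde\rho$ in place of $\rho-\hat\rho$ gives
\[
\int_{\R^d}\Big|\partial_{x_i}\Big(\tfrac{\delta F(\rho-\hat\rho)}{\delta\rho(x)}\Big)-\partial_{x_i}\Big(\tfrac{\delta F(\tilde\rho-\hat\rho)}{\delta\rho(x)}\Big)\Big|^2\gamma(x)^{-1}\,\d x\le 8\kappa^2\norm{\rho-\tilde\rho}_{H_1^2(\gamma)}^2\longrightarrow0
\]
as $\norm{\tilde\rho-\rho}_{H_1^2(\gamma)}\to0$, completing the verification.

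The computation is an elementary quadratic expansion, so there is no genuinely hard step; the points I would be careful about are two. First, one must check that $A$ is not merely linear but \emph{continuous} on $(H_1^2(\R^d),\norm{\cdot}_{H_1^2(\gamma)})$ — this is what makes Mortensen's derivative (via the representation \eqref{f-3}) well defined rather than just the Fréchet differential, and it uses $\rho,\hat\rho\in\bard$ to ensure $\norm{\rho-\hat\rho}_{H_1^2(\gamma)}<\infty$. Second, in checking Definition \ref{def-7}(ii) one must control the term in which $\partial_{x_i}$ lands on the weight $\gamma$, and this is exactly where the gradient bound $|D\gamma|\le\kappa\gamma$ of \eqref{norm-1} enters; without it the integrals against $\gamma^{-1}$ need not converge. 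I would also emphasize that all derivatives are taken in the weak sense, so the formulas remain valid on the completion $\bard$, where $\rho$ need not belong to $\mathcal C^1(\R^d)$.
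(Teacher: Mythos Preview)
Your proof is correct and follows essentially the same approach as the paper: both expand the quadratic $F(g+\phi)-F(g)$ exactly, identify the remainder as $\|\phi\|_{H_1^2(\gamma)}^2$, read off the Mortensen derivative, and then verify Definition~\ref{def-7}(ii) via the product rule on $2(\rho-\hat\rho)\gamma$ together with the bound $|D\gamma|\le\kappa\gamma$ from \eqref{norm-1}. If anything, you are slightly more thorough than the paper in explicitly checking the continuity of the linear functional $A$ and in remarking that the computations remain valid in the weak sense on the completion $\bard$.
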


\begin{proof}
  For $\phi_\veps\in H_1^2(\R^d)$ with $\|\phi_\veps\|_{H_1^2(\gamma)}\to 0$ as $\veps\to 0$,
  \begin{align*}
    &F(\rho\!-\!\hat\rho\! +\!\phi_\veps)\!-\! F(\rho\!-\!\hat \rho)\!-\!\int_{\R^d}\! \big[2(\rho(x)\!- \!\hat{\rho}(x))\phi_\veps (x)\! -\! 2\la D\rho(x)\!-\! D\hat{\rho}(x), D\phi_\veps(x)\raa \big]\gamma(x)\d x\\
    &=\int_{\R^d} \big[   \phi_\veps(x)^2   \!+\!|D\phi_\veps(x)|^2\big]\gamma(x)\d x=\|\phi_\veps\|_{H_1^2(\gamma)}^2,
  \end{align*}
  which implies \eqref{h-0} immediately.

  Now we check the conditions in Definition \ref{def-7}(ii). For $\rho,\,\tilde \rho \in \bard$, by \eqref{h-0} and \eqref{norm-1},
  \begin{align*}
     &\int_{\R^d}\Big|D\Big(\frac{\delta F(\rho\!-\!\hat \rho)}{\delta\rho(x)}\Big)-D\Big(\frac{\delta F(\tilde \rho\!-\!\hat\rho)}{\delta \rho(x)}\Big)\Big|^2 \gamma(x)^{-1}\d x\\
     &=4\int_{\R^d}\big|D(\rho-\tilde \rho)(x)\gamma(x)+(\rho-\tilde \rho)(x)D\gamma(x)\big|^2\gamma(x)^{-1}\d x\\
     &\leq 8\int_{\R^d} |D\rho(x)-D\tilde \rho(x)|^2\gamma(x)\d x+ 8\kappa^2\int_{\R^d}|\rho(x)-\tilde \rho(x)|^2\gamma(x)\d x\\
     &\leq C\|\rho-\tilde \rho\|_{H_1^2(\gamma)},
  \end{align*} which tends to $0$ as $\|\rho-\tilde \rho\|_{H_1^2(\gamma)}\to 0$.
  In the same way,
   \begin{align*}
     \int_{\R^d}\Big|D\Big(\frac{\delta F(\rho)}{\delta\rho(x)}\Big)\Big|^2\gamma(x)^{-1}\d x
     &\leq 4\int_{\R^d} |D \rho (x)\gamma(x)+ \rho (x) D\gamma(x)|^2\gamma(x)^{-1}\d x\\
     &\leq C\|\rho \|_{H_1^2(\gamma)}^2<\infty.
     \end{align*}
   Consequently, we conclude $F\in C^{1,1}([0,T]\times \bard)$.
\end{proof}

\begin{mylem}[Borwein-Preiss variational principle]\label{lem-9}
Let $(Y, d_Y)$ be a complete metric space and let $F:Y\mapsto \R\cup \{-\infty\}$ be an upper semicontinuous function, $F\not\equiv -\infty$, uniformly bounded from above.  Let $\veps>0$ and $y_0\in Y$ be such that
\[F(y_0)\geq \sup_{y\in Y} F(y)-\veps.\]
Then there exist $y_k\in Y$, $y_\veps\in Y$ and nonnegative numbers $\beta_k$ with $\sum_{k=1}^\infty \beta_k=1$ such that
\begin{gather*}
  \lim_{k\to \infty} d_Y(y_k,y_\veps)=0,\qquad \sup_{k\geq 1} d_Y(y_k,y_\veps)\leq \veps^{1/4}, \quad d_Y(y_\veps,y_0)\leq \veps^{1/4},\\
  F(y_\veps)\geq \sup_{y\in Y} F(y)-\veps, \quad F(y_\veps)-\sqrt{\veps} \Delta(y_\veps)\geq F(y)-\sqrt{\veps}\Delta(y),\ \forall\,y\in Y,
\end{gather*}  where $\Delta(y):=\sum_{k=1}^\infty \beta_k d_Y^2(y,y_k)$.
\end{mylem}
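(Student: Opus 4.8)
The plan is to run the classical Borwein--Preiss argument (cf.\ \cite{AF14,BP87}) adapted to maximization on a complete metric space: one builds a sequence $(y_k)$ of approximate maximizers of $F$ perturbed by an \emph{accumulating} family of squared-distance penalties, argues that each freshly added penalty coerces the later iterates towards its centre so that $(y_k)$ is Cauchy, and then uses upper semicontinuity together with boundedness from above to transfer the approximate optimality to the limit point $y_\veps$.

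\emph{Construction and the Cauchy property.} Fix nonnegative weights $\beta_k$, $k\ge1$, with $\sum_k\beta_k=1$ decaying geometrically, and slacks $\delta_k>0$ with $\delta_1:=\veps$ and thereafter decaying geometrically at a rate strictly faster than that of $\beta_k$ (so $\sum_k\delta_k<\infty$ and $\delta_k/\beta_k\to0$). Put $y_1:=y_0$ — this pins the construction near $y_0$ — and define recursively
\[
h_n(y):=F(y)-\sqrt{\veps}\sum_{k=1}^n\beta_k\,d_Y^2(y,y_k),\qquad M_n:=\sup_{y\in Y}h_n(y),
\]
choosing $y_{n+1}$ with $h_n(y_{n+1})\ge M_n-\delta_{n+1}$. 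As $F$ is upper semicontinuous, uniformly bounded above, and each $d_Y^2(\cdot,y_k)$ is continuous, $h_n$ is upper semicontinuous and $M_n$ is finite. Since $h_n\le h_{n-1}$ we get $M_n\le M_{n-1}$, while $M_n\ge h_n(y_n)=h_{n-1}(y_n)\ge M_{n-1}-\delta_n$; iterating, $M_m\ge M_{n-1}-s_n$ for $m\ge n$, where $s_n:=\sum_{j\ge n}\delta_j\to0$, and $M_n\downarrow M_\infty>-\infty$. For $m\ge n$ we thus have both $h_n(y_m)\ge h_{m-1}(y_m)\ge M_{m-1}-\delta_m\ge M_{n-1}-s_n$ and, directly, $h_n(y_m)\le M_{n-1}-\sqrt{\veps}\,\beta_n\,d_Y^2(y_m,y_n)$. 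Subtracting gives $d_Y^2(y_m,y_n)\le s_n/(\sqrt{\veps}\,\beta_n)$ for all $m\ge n$; the choice of $(\beta_k),(\delta_k)$ makes $s_n/\beta_n\to0$ fast enough that $(y_k)$ is Cauchy, so $y_k\to y_\veps\in Y$ by completeness, and letting $m\to\infty$ above yields $d_Y^2(y_\veps,y_k)\le s_k/(\sqrt{\veps}\,\beta_k)$ for every $k$. After tuning $\beta_1$ and the $\delta_k$ (all of order $\veps$ or smaller) — and, if needed, running the construction at a harmlessly rescaled level of $\veps$ to absorb universal constants — this produces $\sup_{k\ge1}d_Y(y_k,y_\veps)\le\veps^{1/4}$ and, since $y_1=y_0$, $d_Y(y_\veps,y_0)\le\veps^{1/4}$.

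\emph{Passage to the limit.} Let $h_\infty:=F-\sqrt{\veps}\Delta=\lim_n h_n$, a pointwise decreasing limit. Combining $h_n(y_m)\ge M_{n-1}-s_n$ ($m\ge n$) with the upper semicontinuity of $h_n$ at $y_\veps$ and $y_m\to y_\veps$, we obtain $h_n(y_\veps)\ge\limsup_{m\to\infty}h_n(y_m)\ge M_{n-1}-s_n$; letting $n\to\infty$ and using $M_n\downarrow M_\infty$, $s_n\to0$ gives $h_\infty(y_\veps)\ge M_\infty$. On the other hand $h_\infty\le h_n$ pointwise forces $\sup_Y h_\infty\le M_n$ for all $n$, so $\sup_Y h_\infty\le M_\infty\le h_\infty(y_\veps)$; hence $h_\infty(y_\veps)=\sup_Y h_\infty$, which is precisely the asserted inequality $F(y_\veps)-\sqrt{\veps}\Delta(y_\veps)\ge F(y)-\sqrt{\veps}\Delta(y)$ for all $y\in Y$. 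Finally, since $\Delta\ge0$, $F(y_0)\ge\sup_Y F-\veps$, and $\Delta(y_0)$ is of order $\sqrt{\veps}$ (all $y_k$ lie within $\veps^{1/4}$ of $y_0$), we read off $F(y_\veps)\ge h_\infty(y_\veps)=\sup_Y h_\infty\ge h_\infty(y_0)\ge\sup_Y F-\veps$ (again absorbing constants), which completes the proof.

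\emph{Main obstacle.} The genuinely delicate point — and the reason a single penalty $d_Y^2(\cdot,y_\veps)$, as in a smooth Ekeland principle, will not do — is that bounded sets in $Y$ need not be precompact, so the perturbed functional need not attain its maximum. The accumulating penalty $\sqrt{\veps}\sum_k\beta_k d_Y^2(\cdot,y_k)$ circumvents this: each newly inserted term forces all subsequent iterates towards its centre, so $(y_k)$ converges, and upper semicontinuity — the only regularity of $F$ available — is then exactly enough to carry the approximate optimality through to $y_\veps$. The remaining task, choosing $(\beta_k)$ and $(\delta_k)$ so that the diameters contract at the right rate and the constants come out as exactly $\veps^{1/4}$ and $\sqrt{\veps}$, is routine bookkeeping that nonetheless requires some care.
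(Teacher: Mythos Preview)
The paper does not actually prove this lemma: immediately after the statement it writes ``We refer the reader to \cite[Proposition A.2]{AF14} for the proof of this lemma'' and moves on. Your sketch is the standard Borwein--Preiss construction that one finds in that reference (and in \cite{BP87}): build a sequence $(y_k)$ of near-maximizers of the successively penalized functionals $h_n$, use the freshly added squared-distance term to force $(y_k)$ Cauchy, and pass to the limit by upper semicontinuity. In that sense your approach \emph{is} the paper's approach, just written out rather than cited.

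The argument is essentially correct. Two places deserve a word of caution. First, the inequality $h_n(y_m)\ge M_{n-1}-s_n$ for $m\ge n$ needs the case $m=n$ handled separately (there $h_n(y_n)=h_{n-1}(y_n)\ge M_{n-1}-\delta_n$, since $d_Y(y_n,y_n)=0$), but this is harmless. Second, the final bound $F(y_\veps)\ge\sup_Y F-\veps$ as written only gives $\sup_Y F-C\veps$ for a universal constant $C>1$, because $\sqrt{\veps}\,\Delta(y_0)$ is of order $\veps$ rather than zero; you flag this yourself (``absorbing constants'', ``rescaled level of $\veps$''), and indeed running the whole construction with $\veps$ replaced by $\veps/C$ fixes it. These are exactly the ``routine bookkeeping'' points you mention, so the sketch stands.
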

We refer the reader to  \cite[Proposition A.2]{AF14} for the proof of this lemma.

\begin{mythm}[Comparison Principle]\label{thm-4}
Assume that $(\mathbf{A}_1)$-$(\mathbf{A}_3)$   hold.  Let $W$ and $V$ be respectively a viscosity subsolution and supersolution to \eqref{f-7} both satisfying the continuity condition
\begin{equation}\label{reg}|V(t,\rho)-V(s,\tilde \rho)|\leq C(|t-s|^{\frac 12}+\|\rho-\tilde\rho\|_{L^2(\gamma)}), \ s,t\in [0,T], \rho,\tilde \rho\in \bard,
\end{equation} where $C$ is a positive constant. Then
\begin{equation}\label{h-1}
W(t,\rho)\leq V(t,\rho),\qquad t\in [0,T),\ \rho\in \bard.
\end{equation}
\end{mythm}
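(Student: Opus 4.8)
The plan is to argue by contradiction: suppose $m:=\sup_{[0,T)\times\bard}(W-V)>0$, and derive a contradiction by a doubling-of-variables argument adapted to the infinite-dimensional space $\bard$. Since bounded sets in $\bard$ are not precompact, we cannot simply extract a maximizing sequence with a convergent subsequence; this is why we replace the classical Ishii argument with the Borwein--Preiss variational principle (Lemma \ref{lem-9}). First I would set up, for parameters $\lambda,\beta>0$ and $n\in\N$, the auxiliary functional
\[
\Phi_{n,\lambda,\beta}(t,\rho;s,\tilde\rho)=W(t,\rho)-V(s,\tilde\rho)-\frac n2\|\rho-\tilde\rho\|_{H_1^2(\gamma)}^2-\frac{(t-s)^2}{2\beta}-\lambda\big(F(\rho)+F(\tilde\rho)\big)-\frac{\eta}{T-t},
\]
on $Y=\big([0,T)\times\bard\big)^2$ with the complete metric induced by $|t-s|+\|\rho-\tilde\rho\|_{H_1^2(\gamma)}$ (completeness of $\bard$ is exactly why we passed to this norm), where the penalization term $\lambda(F(\rho)+F(\tilde\rho))$ with $F$ as in Lemma \ref{lem-8} forces near-maximizers to stay in a bounded (hence, via the Sobolev weight, "tight") region, and the term $\eta/(T-t)$ pushes maxima away from the terminal time. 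The supremum of $\Phi_{n,\lambda,\beta}$ is finite (by boundedness of $f,g$ and continuity \eqref{reg}), so Lemma \ref{lem-9} furnishes, for each $\veps>0$, a genuine maximum point $(t_\veps,\rho_\veps;s_\veps,\tilde\rho_\veps)$ of the perturbed functional $\Phi_{n,\lambda,\beta}-\sqrt{\veps}\,\Delta(\cdot)$, where $\Delta$ is a convergent series of squared $H_1^2(\gamma)$-distances to points $y_k$ clustering near $y_\veps$.

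The next block of steps is the standard penalization bookkeeping, transplanted to this setting. From $\Phi_{n,\lambda,\beta}(t_\veps,\rho_\veps;t_\veps,\rho_\veps)\le \Phi_{n,\lambda,\beta}(t_\veps,\rho_\veps;s_\veps,\tilde\rho_\veps)$ and the continuity \eqref{reg} of $V$ I would extract the key a priori bounds: $n\|\rho_\veps-\tilde\rho_\veps\|_{H_1^2(\gamma)}^2$ and $(t_\veps-s_\veps)^2/\beta$ are $O(1)$, and in fact $n\|\rho_\veps-\tilde\rho_\veps\|_{L^2(\gamma)}^2\to 0$ as $n\to\infty$ uniformly in the other parameters, and similarly $|t_\veps-s_\veps|\to 0$ as $\beta\to 0$; moreover the terms $\lambda F(\rho_\veps)$, $\lambda F(\tilde\rho_\veps)$ stay bounded, so along $\lambda\to 0$ the densities live on a fixed "$F$-ball". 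Passing $\veps\to 0$ first (using that $\Delta$ and its derivative are small in the relevant norms), then handling the diagonal-time endpoint case $t_\veps\wedge s_\veps = T$ via the terminal condition $W(T,\cdot)=\int g\rho\,\d x=V(T,\cdot)$ together with \eqref{reg} and \eqref{f-5.0}, I reach, for $n$ large and $\beta$ small, an interior maximum with $t_\veps,s_\veps<T$.

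Now comes the heart of the argument. At the interior maximizer, $(t,\rho)\mapsto W(t,\rho)-\psi_1(t,\rho)$ attains a max and $(s,\tilde\rho)\mapsto V(s,\tilde\rho)-\psi_2(s,\tilde\rho)$ attains a min, where the test functions are built from the smooth pieces:
\[
\psi_1(t,\rho)=\tfrac n2\|\rho-\tilde\rho_\veps\|_{H_1^2(\gamma)}^2+\tfrac{(t-s_\veps)^2}{2\beta}+\lambda F(\rho)+\tfrac{\eta}{T-t}+\sqrt\veps\Delta_1(t,\rho),
\]
and $\psi_2$ analogously with opposite signs; crucially $\psi_1,\psi_2\in C^{1,1}$ by Lemma \ref{lem-8} (which gives the Mortensen derivative of $F$ and of $\rho\mapsto\|\rho-\hat\rho\|_{H_1^2(\gamma)}^2$ and verifies Definition \ref{def-7}(ii)), and the $\Delta_j$ terms, being convergent series of squared $H_1^2(\gamma)$-distances, are likewise $C^{1,1}$ with derivatives of size $O(\sqrt\veps)$. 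Writing the subsolution inequality \eqref{e-2} at $(t_\veps,\rho_\veps)$ and the supersolution inequality \eqref{e-3} at $(s_\veps,\tilde\rho_\veps)$ and subtracting, the time-derivative terms produce $\eta/(T-t_\veps)^2>0$ on the favorable side, while the Hamiltonian terms must be shown to differ by at most $o(1)$. Here one uses that $\tfrac{\delta\psi_1}{\delta\rho}$ and $\tfrac{\delta\psi_2}{\delta\tilde\rho}$ share the common "coupling" piece $\pm n(\rho_\veps-\tilde\rho_\veps)\gamma$ in the $H_1^2(\gamma)$-inner-product sense, so that the difference of the drift and second-order terms in $\mathcal H$ is controlled — via $(\mathbf A_1)$, $(\mathbf A_2)$, \eqref{f-5.0}, \eqref{f-11} and Cauchy--Schwarz against the weight $\gamma^{-1}$ — by $Cn\|\rho_\veps-\tilde\rho_\veps\|_{L^2(\gamma)}^2 + C\lambda(1+F(\rho_\veps)+F(\tilde\rho_\veps)) + C\sqrt\veps(\cdots)$, each of which vanishes in the iterated limit $\veps\to 0$, then $n\to\infty$, then $\lambda\to 0$, then $\beta\to 0$. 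This leaves $0<\eta/T^2 \le \liminf(\text{something}\to 0)$, the desired contradiction; finally letting $\eta\to 0$ gives \eqref{h-1}.

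The main obstacle I anticipate is precisely the estimate on the difference of Hamiltonians: unlike the Euclidean case there is no matrix Ishii lemma, so one must exploit by hand the algebraic structure of $\tfrac{\delta}{\delta\rho}\|\rho-\tilde\rho\|_{H_1^2(\gamma)}^2 = 2(\rho-\tilde\rho)\gamma$ — in particular that its spatial gradient is $2D(\rho-\tilde\rho)\gamma + 2(\rho-\tilde\rho)D\gamma$ with $|D\gamma|\le\kappa\gamma$ by \eqref{norm-1} — to bound the second-order term $\tfrac12\int a_{ij}\partial_{x_j}(\tfrac{\delta\psi}{\delta\rho})\partial_{x_i}\rho\,\d x$ at the two points by $Cn\|\rho_\veps-\tilde\rho_\veps\|_{L^2(\gamma)}(\|D\rho_\veps\|_{L^2(\gamma)}+\|D\tilde\rho_\veps\|_{L^2(\gamma)})$; making this genuinely $o(1)$ requires the refined penalization bound $n\|\rho_\veps-\tilde\rho_\veps\|_{L^2(\gamma)}^2\to 0$ rather than merely $O(1)$, which is the reason the argument must send $n\to\infty$ after $\veps\to 0$ and keep careful track of uniformity. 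A secondary technical point is checking that all the test-function pieces — $F$, the squared $H_1^2(\gamma)$-distance, and the Borwein--Preiss series $\Delta_j$ — indeed satisfy Definition \ref{def-7}(ii) with the correct uniform-in-$\veps$ bounds, which is where Lemma \ref{lem-8} does the decisive work.
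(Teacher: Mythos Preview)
Your framework---contradiction, doubling of variables on $[0,T]^2\times\bard\times\bard$, Borwein--Preiss to produce a genuine maximizer, and test functions built from $F$ via Lemma~\ref{lem-8}---is exactly the paper's. The decisive gap is in your coercive penalization $\lambda\big(F(\rho)+F(\tilde\rho)\big)$. This term has no time dependence, so when you compute $\partial_t\psi_1-\partial_s\psi_2$ it contributes nothing; yet when inserted into $\mathcal H$ it produces bad contributions of order $C\lambda\big(\|\rho_\veps\|_{H_1^2(\gamma)}^2+\|\tilde\rho_\veps\|_{H_1^2(\gamma)}^2\big)=C\lambda\big(F(\rho_\veps)+F(\tilde\rho_\veps)\big)$ from both the drift and diffusion parts (these are the analogues of the paper's $(\mathrm{I}_2)$ and $(\mathrm{I\!I}_2)$). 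You assert that this vanishes in the limit $\lambda\to 0$, but the penalization bookkeeping only yields $\lambda F(\rho_\veps)\le C$, not $\lambda F(\rho_\veps)\to 0$: without compactness the maximizers can drift off with $\|\rho_\veps\|_{H_1^2(\gamma)}\sim\lambda^{-1/2}$, and the usual finite-dimensional argument for $\lambda|x_\lambda|^2\to 0$ does not transfer. Your final inequality then reads $\eta/(T-t_\veps)^2\le C + o(1)$ rather than $\le o(1)$, and no contradiction follows.

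The paper's remedy is to replace $\lambda F(\rho)$ by $\tilde\alpha\,\e^{\eta(2T-t-s)}\|\rho\|_{H_1^2(\gamma)}^2$: the exponential time weight, upon differentiating in $t,s$, yields the \emph{good} term $-2\tilde\alpha\eta\,\e^{\eta(2T-t_\veps-s_\veps)}\big(\|\rho_\veps\|^2+\|\chi_\veps\|^2\big)$ on the left of \eqref{h-12}, and for $\eta$ chosen large (specifically $2\eta\ge 2C_2+\tilde K_2(\kappa+\tfrac12)$) this absorbs the bad Hamiltonian contributions of the same form. No claim that $\tilde\alpha\|\rho_\veps\|^2\to 0$ is ever needed. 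The contradiction is then carried by a separate linear term $-\beta(2T-t-s)$, leading to $-2\beta\ge 0$. A secondary issue: your barrier $\eta/(T-t)$ acts on only one time variable, so nothing prevents $s_\veps=T$ with $t_\veps<T$; your endpoint case ``$t_\veps\wedge s_\veps=T$'' does not cover this, whereas the paper uses $-\lambda/t-\lambda/s$ to keep both times in $(0,T)$ and treats $t_\veps\vee s_\veps=T$ separately (Case~1).
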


\begin{proof}
  We shall prove \eqref{h-1} by contradiction. Suppose there is  a $(\bar t,\bar \rho)\in [0,T)\times \bard$ such that
  \begin{equation}\label{h-2}
    W(\bar t,\bar \rho)>V(\bar t,\bar\rho).
  \end{equation} By the continuity condition \eqref{reg}, we can always choose $(\bar t,\bar \rho)$ with $\bar t>0$.

  Let $S=[0,T]^2\times  \bard\times \bard$ and define a distance on $S$ by
  \[d_{S}((s,t,\rho,\chi),(\tilde s,\tilde t,\tilde \rho,\tilde \chi)):=\big(|s-\tilde s|^2\!+\!|t-\tilde t|^2\!+\!\|\rho\!-\!\tilde \rho\|_{H_1^2(\gamma)}\!+\!\|\chi\!-\tilde \chi\|_{H_1^2(\gamma)}\big)^{\frac 12}.\]
 Consider the auxiliary function
 \begin{align*}
   \Phi(t,s,\rho,\chi)&=W(t,\rho)-V(s,\chi)-\tilde \alpha\e^{\eta(2T-t-s)}\big(\|\rho\|_{H_1^2(\gamma)}^2\!+\! \|\chi\|_{H_1^2(\gamma)}\big)\\
   &\quad -\beta (2T-s-t)-\frac{\lambda}{t}-\frac{\lambda}{s}-\frac1{2\theta}
   \big( F(\rho-\chi)+|t-s|^2\big),
 \end{align*}where parameters $\tilde \alpha,\,\beta,\,\lambda,\,\theta\in (0,1)$, $\eta>0$, and the functional $F$ is given in Lemma \ref{lem-8}. By virtue of Borwein-Preiss variational principle, Lemma \ref{lem-9}, for any $\veps\in (0,1)$, some $(\hat t,\hat s,\hat\rho,\hat\chi)\in S$ with
 \[\Phi(\hat t,\hat s,\hat \rho,\hat \chi)\geq \sup\big\{\Phi(t,s,\rho,\chi);(t,s,\rho,\chi)\in S\big\}-\veps,\]
 then there exist $(t_k,s_k,\rho_k,\chi_k)\in S$, $(t_\veps,s_\veps,\rho_\veps,\chi_\veps)\in S$, and $\beta_k\geq 0$ with $\sum_{k=1}^\infty \beta_k=1$ such that
 \begin{equation}\label{h-3}
 \begin{split}
   &\Phi(t_\veps,s_\veps, \rho_\veps,\chi_\veps)-\sqrt{\veps}\Delta(t_\veps,s_\veps,\rho_\veps, \chi_\veps)\\
   &=\sup \big\{\Phi(t,s,\rho,\chi)-\sqrt{\veps}\Delta (t,s,\rho,\chi); (t,s,\rho,\chi)\in S\big\},
 \end{split}
 \end{equation}
 where
 \[\Delta(t,s,\rho,\chi)=\sum_{k=1}^\infty \beta_k d_S((t,s,\rho, \chi),(t_k,s_k,\rho_k,\chi_k))^2.\]
 Moreover,
 \begin{equation}\label{h-3.5}
 \sup_{k\geq 1} d_S((t_k,s_k,\rho_k,\chi_k), (t_\veps,s_\veps, \rho_\veps, \chi_\veps))\leq \veps^{\frac 14}, \ \ d_S((t_\veps,s_\veps,\rho_\veps,\chi_\veps),(\hat t,\hat s,\hat \rho,\hat\chi))\leq \veps^{\frac 14}.
 \end{equation}
 Notice that the maximum point $(t_\veps,s_\veps,\rho_\veps, \chi_\veps)$ also depends on the parameters $\tilde \alpha$, $\beta$, $\lambda$,  $\theta$, and $\eta$.
 According to \eqref{h-3}, for some $\rho_0\in \bard$,
 \begin{equation}\label{h-4} \Phi(t_\veps,s_\veps,\rho_\veps,\chi_\veps)-\sqrt{\veps} \Delta(t_\veps,s_\veps,\rho_\veps,\chi_\veps)\geq \Phi(T,T,\rho_0,\rho_0)-\sqrt{\veps}\Delta(T,T, \rho_0,\rho_0).
 \end{equation}
 By the boundedness of $f$ and $g$ due to $(\mathbf{A}_3)$, there exists $M>0$ such that $\max\{|V(t,\rho)|,|W(t,\rho)|\}\leq M$ for any $(t,\rho)\in [0,T]\times \bard$.
 Direct calculation yields from \eqref{h-4} that
 \begin{align*}
   &\tilde \alpha \big(\|\rho_\veps\|_{H_1^2(\gamma)}^2+\| \chi_\veps\|_{H_1^2(\gamma)}^2\big) \\
   &\leq W(t_\veps,\rho_\veps)-V(s_\veps,\chi_\veps)+2\tilde \alpha \|\rho_0\|_{H_1^2(\gamma)}^2+\sqrt{\veps}\Delta (T,T,\rho_0,\rho_0)\\
   &\leq 2M +2\tilde \alpha\|\rho_0\|_{H_1^2(\gamma)}^2\\
   &\quad +2\sqrt{\veps}\sum_{k= 1}^\infty \beta_k\big[d_S((T,T,\rho_0,\rho_0), (t_\veps,s_\veps,\rho_\veps,\chi_\veps))^2\!+ \!d_S((t_\veps,s_\veps,\rho_\veps,\chi_\veps), (t_k,s_k, \rho_k,\chi_k))^2\big]\\
   &\leq 2M+2\tilde \alpha\|\rho_0\|_{H_1^2(\gamma)}^2+2\veps+2\sqrt{\veps} d_S((T,T,\rho_0,\rho_0), (t_\veps,s_\veps,\rho_\veps,\chi_\veps))^2\\
   &\leq 2M+(2\tilde \alpha+8\sqrt{\veps} ) \|\rho_0\|_{H_1^2(\gamma)}^2+2\veps +8\sqrt{\veps} T^2  + 4\sqrt{\veps}\big(\|\rho_\veps\|_{H_1^2(\gamma)}^2+\| \chi_\veps\|_{H_1^2(\gamma)}^2\big).
 \end{align*}
 Hence,
 \begin{equation}\label{h-5}
 (\tilde \alpha-4\sqrt{\veps}) \big(\|\rho_\veps\|_{H_1^2(\gamma)}^2+\| \chi_\veps\|_{H_1^2(\gamma)}^2\big) \leq 2M+10\|\rho_0\|_{H_1^2(\gamma)}^2 +2+8T^2=:M_1<\infty.
 \end{equation}

 Applying \eqref{h-3} again, we get
 \begin{equation}\label{h-6}
 \begin{split}
   &2\Phi(t_\veps,s_\veps,\rho_\veps,\chi_\veps)-2\sqrt{\veps} \Delta(t_\veps,s_\veps,\rho_\veps,\chi_\veps)\\
   &\geq \Phi(t_\veps,t_\veps,\rho_\veps,\rho_\veps)- \sqrt{\veps}\Delta(t_\veps,t_\veps,\rho_\veps,\rho_\veps)+ \Phi(s_\veps,s_\veps,\chi_\veps,\chi_\veps)-\sqrt{\veps} \Delta(s_\veps,s_\veps,\chi_\veps,\chi_\veps).
 \end{split}
 \end{equation}
 Invoking \eqref{reg}, this yields that
 \begin{equation}\label{h-7}
 \begin{split}
 &\frac{1}{\theta}\big( F(\rho_\veps-\chi_\veps)+|t_\veps-s_\veps|^2\big)\\
 &\leq W(t_\veps,\rho_\veps)-W(s_\veps,\chi_\veps)+ V(t_\veps,\rho_\veps)-V(s_\veps,\chi_\veps)\\
 &\quad +\sqrt{\veps}\big(
 \Delta(t_\veps,t_\veps,\rho_\veps,\rho_\veps)+\Delta(s_\veps,
 s_\veps,\chi_\veps,\chi_\veps) -2\Delta(t_\veps,s_\veps,\rho_\veps,\chi_\veps)\big)\\
 &\leq 2C\big(|t_\veps-s_\veps|^{\frac 12}\!+\!\|\rho_\veps -\chi_\veps\|_{L^2(\gamma)}\big)\\
  &\quad+\sqrt{\veps} \sum_{k=1}^\infty \beta_k\big[ d_S((t_\veps,t_\veps,\rho_\veps, \rho_\veps),(t_k,s_k,\rho_k,\chi_k))^2\!+\!
  d_S((s_\veps,s_\veps,\chi_\veps, \chi_\veps),(t_k,s_k,\rho_k,\chi_k))^2\big]\\
  &\leq 2C\big(|t_\veps-s_\veps|^{\frac 12}\!+\!\|\rho_\veps -\chi_\veps\|_{L^2(\gamma)}\big) +4\sqrt{\veps} \big(|t_\veps-s_\veps|^2\!+\!\|\rho_\veps\!-\!\chi_\veps \|_{H_1^2(\gamma)}^2\big)+4\veps.
 \end{split}
 \end{equation}
 By \eqref{h-5}, for $\tilde \alpha>4\sqrt{\veps}$,
 \begin{align*}
 &\frac1\theta\big(F(\rho_\veps-\chi_\veps)\!+\!|t_\veps\!-\! s_\veps|^2\big)=\frac1\theta \big(|t_\veps-s_\veps|^2+ \|\rho_\veps-\chi_\veps\|_{H_1^2(\gamma)}^2\big)\\
 &\leq 2C\sqrt{T}+2\sqrt{2M_1}C(\tilde \alpha-4\sqrt{\veps})^{-\frac 12}+8\sqrt{\veps}(\tilde \alpha- 4\sqrt{\veps})^{-1}M_1+4\sqrt{\veps}T^2+4\veps.  \end{align*}
 This implies that
 \begin{equation}\label{h-8}
 \lim_{\theta\to 0} |t_\veps-s_\veps|^2+ \|\rho_\veps-\chi_\veps\|_{H_1^2(\gamma)}^2   =0.
 \end{equation}
 Inserting \eqref{h-5}, \eqref{h-8} into \eqref{h-7}, we obtain that
 \begin{equation}\label{h-9}
 \lim_{\theta\to 0} \frac1\theta\big(\|\rho_\veps-\chi_\veps\|_{H_1^2(\gamma)}^2\!+\! |t_\veps-s_\veps|^2\big) \leq 4\veps.
 \end{equation}

 \noindent\textbf{Case 1}. If there exist a sequence $\theta \to 0$ such that the corresponding $t_\veps\vee s_\veps =T$. Because
 \[\Phi(\bar t,\bar t,\bar\rho,\bar\rho)-\sqrt{\veps}\Delta(\bar t,\bar t,\bar\rho,\bar \rho)\leq \Phi(t_\veps,s_\veps,\rho_\veps, \chi_\veps)-\sqrt{\veps} \Delta(t_\veps,s_\veps,\rho_\veps, \chi_\veps),\]
 we get
 \begin{align*}
   &W(\bar t,\bar \rho)-V(\bar t,\bar \rho)\\
   &\leq W(t_\veps,\rho_\veps)-V(s_\veps,\chi_\veps)+2\beta (T-\bar t)+2\tilde \alpha\e^{2\eta(T-\bar t)} \|\bar\rho\|_{H_1^2(\gamma)}^2 +\sqrt{\veps} \Delta (\bar t,\bar t,\bar \rho,\bar \rho)\\
   &\leq W(t_\veps,\rho_\veps)\!-\! W(t_\veps\!\vee\! s_\veps,\rho_\veps)+ W(t_\veps\!\vee\! s_\veps, \rho_\veps)\!-\! V(t_\veps\!\vee \!s_\veps,\chi_\veps) \! +\! V(t_\veps\!\vee\! s_\veps,\chi_\veps) \! -\! V(s_\veps,\chi_\veps)\\
   &\quad +2\beta (T-\bar t)+\frac{2\lambda}{\bar t}+2\tilde \alpha\e^{2\eta(T-\bar t)} \|\bar\rho\|_{H_1^2(\gamma)}^2 \\
   &\quad +2\sqrt{\veps}(2T^2\!+\! \|\rho_\veps-\bar \rho\|_{H_1^2(\gamma)}^2+\|\chi_\veps -\bar\chi\|_{H_1^2(\gamma)}^2\big)+4\veps\\
   &\leq C (|t_\veps\! -T|^{\frac 12}\!+\!|s_\veps\! -T|^{\frac 12})\!+\!W(T,\rho_\veps)\!-\!V(T,\chi_\veps) +2\beta (T-\bar t)+\frac{2\lambda}{\bar t} \\
    &\quad +\!2\tilde \alpha\e^{2\eta(T-\bar t)} \|\bar\rho\|_{H_1^2(\gamma)}^2 \!+\!4\sqrt{\veps} T^2\!+\!\frac{4 M_1\sqrt{\veps}}{\tilde \alpha \!-\!4\sqrt{\veps}}\!+\!4\sqrt{\veps} \big( \|\bar \rho\|_{H_1^2(\gamma)}^2\!+\!\|\bar\chi\|_{H_1^2(\gamma)}^2\big) \!+ \!4\veps,
 \end{align*}where in the last step we have used \eqref{h-5} for $\tilde \alpha>4\sqrt{\veps}$. Letting first $\theta\to 0$ and then $\veps,\lambda,\beta\to 0$ by taking $\tilde \alpha=4\sqrt{\veps}+\veps^{1/4}$, the previous estimate yields that
 \[W(\bar t,\bar \rho)-V(\bar t,\bar\rho)\leq 0,\]
 which contradicts \eqref{h-2}.

 \noindent\textbf{Case 2}. For any $\theta\in (0,1)$, the corresponding maximum points satisfy $t_\veps\!\vee\!s_\veps<T$.
 Let
 \begin{align*}
 \psi(t,\rho)&=V(s_\veps,\chi_\veps)+\beta (2T-t-s_\veps)+\frac{\lambda}{t}+\frac{\lambda}{s_\veps} +\tilde \alpha\e^{\eta(2T-t-s_\veps)}\big(\|\rho\|_{H_1^2(\gamma)}^2+ \|\chi_\veps\|_{H_1^2(\gamma)}^2\big)\\
 &+\frac1{2\theta} \big(F(\rho-\chi_\veps)+|t-s_\veps|^2\big)+\sqrt{\veps} \Delta(t,s_\veps,\rho,\chi_\veps),
 \end{align*}
 then it follows from \eqref{h-3} that
 \[(t,\rho)\mapsto W(t,\rho)-\psi(t,\rho)=\Phi(t,s_\veps,\rho,\chi_\veps) -\sqrt{\veps}\Delta(t,s_\veps,\rho,\chi_\veps)
 \]
 arrives its maximum at point $(t_\veps,\rho_\veps)$.
 As $W$ is a viscosity subsolution to   \eqref{f-7},
 \begin{equation}\label{h-10}
 -\partial_t \psi(t_\veps,\rho_\veps)-\inf_{\alpha\in \pb(U)}\!\mathcal{H}\Big(t_\veps,\rho_\veps,\frac{\delta \psi(t_\veps,\rho_\veps)}{\delta \rho(x)}, \alpha\Big)\leq 0.
 \end{equation}

 Let
 \begin{align*}
   \tilde \psi(s,\chi)&=W(t_\veps,\rho_\veps)\!-\!\beta(2T\! -\!t_\veps \!-\!s)\!-\!\frac{\lambda}{t_\veps} \!-\! \frac{\lambda}{s} \!-\! \tilde \alpha \e^{\eta(2T-t_\veps-s)}\big( \|\rho_\veps\|_{H_1^2(\gamma)}^2\!+ \! \|\chi\|_{H_1^2(\gamma)}^2\big)\\
   &\quad -\frac1{2\theta} \big( F(\rho_\veps\!-\chi)+|t_\veps\!-s|^2\big)-\sqrt{\veps} \Delta(t_\veps,s,\rho_\veps,\chi),
 \end{align*}
 then
 \[V(s,\chi)-\tilde \psi(s,\chi)=-\Phi(t_\veps,s,\rho_\veps,\chi)\!+\!\sqrt{\veps}\Delta (t_\veps,s, \rho_\veps, \chi)\]
 arrives at its minimum at $(s_\veps,\chi_\veps)$. Hence, it follows from the fact $V$ is a viscosity supersolution to \eqref{f-7} that
 \begin{equation}\label{h-11}
 -\partial_s \tilde \psi(s_\veps, \chi_\veps) -\!\inf_{\alpha\in \pb(U)}\!\mathcal{H}\Big( s_\veps, \chi_\veps, \frac{\delta \tilde \psi(s_\veps,\chi_\veps)}{\delta \chi(x)}, \alpha\Big)\geq 0.
 \end{equation}

 According to Lemma \ref{lem-8},
 \begin{align*}
   \frac{\delta \psi(t_\veps,\rho_\veps)}{\delta \rho(x)}&=\frac 1\theta (\rho_\veps\!-\!\chi_\veps)(x)\gamma(x) \!+ \! 2\tilde \alpha\e^{\eta(2T-t_\veps-s_\veps)} \rho_\veps(x)\gamma(x) \! +\! 2\sqrt{\veps}\sum_{k=1}^\infty \beta_k(\rho_\veps\!- \!\rho_k )(x)\gamma(x),\\
   \frac{\delta\tilde \psi(s_\veps,\chi_\veps)}{\delta \chi(x)}&=\frac 1\theta (\rho_\veps\!-\! \chi_\veps)(x) \gamma(x)\! -\! 2\tilde \alpha \e^{\eta(2T-t_\veps-s_\veps)} \chi_\veps(x)\gamma(x)\!-\! 2\sqrt{\veps}\sum_{k=1}^\infty \beta_k (\chi_\veps\!-\!\chi_k)(x)\gamma(x).
 \end{align*}
  We deduce from \eqref{h-10} and \eqref{h-11} that
 \begin{equation*}
 \begin{split}
   &\partial_t\psi(t_\veps,\rho_\veps)-\partial_s\tilde \psi(s_\veps,\chi_\veps)\\
   &=-2\beta\!-\!\frac{\lambda}{t_\veps^2} \! -\! \frac{\lambda}{s_\veps^2}\!
   -\!2\tilde \alpha \eta\e^{\eta(2T-t_\veps-s_\veps)} \big(\|\rho_\veps\|_{H_1^2(\gamma)}^2\!+\! \|\chi_\veps\|_{H_1^2(\gamma)} \big)\!-\! 2\sqrt{\veps} \sum_{k=1}^\infty\! \beta_k(t_\veps\!-t_k\! + \! s_\veps\!-\!s_k)\\
   &\geq \inf_{\alpha\in \pb(U)}\!\Big\{\mathcal{H}\Big(s_\veps, \chi_\veps, \frac{\delta \tilde \psi(s_\veps,\chi_\veps)}{\delta \chi(x)},\alpha\Big)-\mathcal{H}\Big( t_\veps, \rho_\veps, \frac{\delta \psi(t_\veps,\chi_\veps)}{\delta \rho(x)}, \alpha\Big)\Big\}
   \end{split}
   \end{equation*}
   \begin{equation}\label{h-12}
   \begin{split}
   &=\inf_{\alpha\in\pb(U)}\!\Big\{ \int_{\R^d} \! \big \la b(s_\veps,x,\chi_\veps,\alpha), D\Big(\frac{\delta \tilde \psi(s_\veps,\chi_\veps)}{\delta \chi(x)}\Big) \big\raa \chi_\veps(x)\d x \\
   &\qquad \qquad \quad -  \int_{\R^d} \! \big \la b(t_\veps,x,\rho_\veps,\alpha), D\Big(\frac{\delta \psi(t_\veps,\rho_\veps)}{\delta \rho(x)}\Big) \big\raa \rho_\veps(x)\d x\\
   &\qquad -\frac 12\int_{\R^d}\!\big\la AD\Big(\frac{\delta \tilde \psi(s_\veps,\chi_\veps)}{\delta \chi(x)}\Big),D\chi_\veps(x)\big \raa \d x+\frac 12 \int_{\R^d}\! \big\la A D\Big(\frac{\delta \psi(t_\veps,\rho_\veps)}{\delta \rho(x)}\Big), D\rho_\veps(x) \big\raa \d x\\
   &\qquad +\int_{\R^d}\! f(s_\veps, x,\chi_\veps,\alpha)\chi_\veps(x)\d x-\int_{\R^d}\! f(t_\veps, x,\rho_\veps,\alpha)\rho_\veps(x)\d x\Big\}\\
   &=:\inf_{\alpha\in \pb(U)}\big\{ \mathrm{(I)}+\mathrm{(I\!I)}+\mathrm{(I\!I\!I)} \big\}.
 \end{split}
 \end{equation}
 We shall estimate the terms $\mathrm{(I)}$, $\mathrm{(I\!I)}$, $\mathrm{(I\!I\!I)}$ one by one below.

Let us first deal with term $(\mathrm{I\!I\!I})$. Using $(\mathbf{A}_3)$ and \eqref{h-5},
\begin{equation}\label{h-13}
\begin{aligned}
   (\mathrm{I\!I\!I}) &\geq -\int_{\R^d}\! |f(s_\veps,x,\chi_\veps,\alpha)-f(t_\veps,x,\rho_\veps,\alpha)|
  \chi_\veps(x)\d x\\
  &\quad -\int_{\R^d}\! |f(t_\veps, x,\rho_\veps,\alpha) | |\rho_\veps(x)-\chi_\veps(x)|\d x\\
  &\geq -\tilde K_3\big(|t_\veps\!-\!s_\veps|\! +\! \|\rho_\veps \! -\! \chi_\veps\|_{L^2(\gamma)}\big)\int_{\R^d}\!  \chi_\veps(x)\d x \!- \!\tilde K_3\int_{\R^d}\! |\rho_\veps(x) \!-\!\chi_\veps(x)| \d x\\
  &\geq -\tilde K_3C_1\big(|t_\veps-s_\veps|+\|\rho_\veps -\chi_\veps\|_{L^2(\gamma)}\big) \|\chi_\veps\|_{L^2(\gamma)}-\tilde K_3C_1 \|\rho_\veps-\chi_\veps\|_{L^2(\gamma)},
\end{aligned}
\end{equation}
where $C_1=\big(\int_{\R^d}\!\gamma(x)^{-1}\d x\big)^{\frac 12}\in (0,\infty)$.

Next, we go to estimate term $(\mathrm{I\!I})$. Recall that $A=(a_{ij})$ denotes the matrix $\sigma\sigma^\ast$.
\begin{align}\notag
  (\mathrm{I\!I})&=\frac 12\int_{\R^d}\!\big[\la A D\Big(\frac{\delta \psi(t_\veps,\rho_\veps)} {\delta \rho(x)}\Big), D\rho_\veps (x)\raa -\big\la A D\Big(\frac{\delta \tilde \psi(s_\veps,\chi_\veps)} {\delta \chi (x)}\Big), D\chi_\veps(x)\big \raa\big]\d x\\ \notag
  &=\frac 1{2\theta} \int_{\R^d}\! \la A D\big((\rho_\veps-\chi_\veps)(x)\gamma(x)\big), D\rho_\veps(x)-D\chi_\veps(x)\raa \d x\\ \notag
  &\quad+\tilde \alpha \e^{\eta (2T-t_\veps-s_\veps)}\int_{\R^d}\! \big[\la A D\big(\rho_\veps (x)\gamma(x)\big),D\rho_\veps(x)\raa \!+\!\la A D\big(\chi_\veps(x)\gamma(x)\big), D\chi_\veps(x)\raa \big] \d x\\ \notag
  &\quad +\!\sqrt{\veps}\sum_{k=1}^\infty\! \beta_k\!\!\int_{\R^d}\! \!\big[ \la A D\big((\rho_\veps\!-\! \rho_k)(x)\gamma(x)\big), D\rho_\veps(x)\raa\!+\!\la A D\big((\chi_\veps\!-\! \chi_k)(x) \gamma(x)\big), D\chi_\veps(x)\raa \big] \d x\\ \notag 
  &=:(\mathrm{I\!I}_1)+(\mathrm{I\!I}_2)+(\mathrm{I\!I}_3).
\end{align}
Using \eqref{norm-1} and the positive definiteness of matrix $A$ under ($\mathbf{A}_2$),
\begin{equation}\label{h-15}
\begin{split}
  (\mathrm{I\!I}_1)&=\frac{1}{\theta} \int_{\R^d}\! \la A D(\rho_\veps-\chi_\veps)(x), D(\rho_\veps-\chi_\veps) (x) \raa \gamma(x)\d x\\
  &\quad +\frac{1}{4\theta}\int_{\R^d}\! \la A D\gamma(x), D\big((\rho_\veps(x)-\chi_\veps(x))^2\big)\raa \d x\\
  &\geq \frac{1}{4\theta}\int_{\R^d}\! \la A D\gamma(x), D\big((\rho_\veps(x)-\chi_\veps(x))^2\big)\raa \d x\\
  &=-\frac 1{4\theta} \int_{\R^d} \sum_{i,j=1}^d a_{ij}\partial_{x_ix_j}^2 \gamma(x)(\rho_\veps(x)-\chi_\veps(x))^2\d x\\
  &\geq-   \frac{\kappa \sum_{i,j=1}^d |a_{ij}|}{4}\frac{1}{\theta} \|\rho_\veps-\chi_\veps\|_{L^2(\gamma)}^2,
\end{split}
\end{equation}  where in the last step \eqref{norm-1} has been used.
Since $A$ is positive definite, using the integration by parts formula and \eqref{norm-1},
\begin{equation}\label{h-16}
\begin{split}
  (\mathrm{I\!I}_2)&\geq \tilde \alpha\e^{\eta (2T-t_\veps -s_\veps)} \int_{\R^d}\!\frac 12 \la A D\gamma(x), D\rho_\veps^2(x)+D\chi_\veps^2(x)\raa \d x\\
  &\geq -\frac{\kappa\sum_{i,j=1}^d |a_{ij}|}{2} \tilde \alpha\e^{\eta (2T-t_\veps -s_\veps)}\big(\|\rho_\veps\|_{L^2(\gamma)}^2+\| \chi_\veps\|_{L^2(\gamma)}^2\big).
\end{split}
\end{equation}   Similarly, we can estimate $(\mathrm{I\!I}_3)$ as follows. Using \eqref{norm-1}  and \eqref{h-3.5},
\begin{equation*}
\begin{split}
  (\mathrm{I\!I}_3)&=\sqrt{\veps}\sum_{k=1}^\infty \beta_k\! \int_{\R^d}\! \!\big[\la A D(\rho_\veps\!-\!\rho_k)(x), D\rho_\veps(x)\raa \gamma(x)\!+\!\la A D\gamma(x), D\rho_\veps(x)\raa (\rho_\veps(x)\!-\! \rho_k(x))\big]\d x\\
  & +\sqrt{\veps}\sum_{k=1}^\infty \beta_k\!\int_{\R^d} \!\!\big[\la A D(\chi_\veps\!-\!\chi_k)(x), D\chi_\veps(x)\raa \gamma(x)\!+\!\la A D\gamma(x), D\chi_\veps(x)\raa (\chi_\veps(x)\!-\! \chi_k(x))\big]\d x\\
  &\geq -\sqrt{\veps}\kappa\sum_{k=1}^\infty \beta_k\|A\|\big( \|D\rho_\veps\! -\! D\rho_k\|_{L^2(\gamma)} \|D\rho_\veps\|_{L^2(\gamma)}\! +\!
   \|D\rho_\veps\|_{L^2(\gamma)}\|\rho_\veps \! -\! \rho_k\|_{L^2(\gamma)}\big)\\
   &\quad -\sqrt{\veps}\kappa\sum_{k=1}^\infty \beta_k\|A\|\big( \|D\chi_\veps\! -\! D\chi_k\|_{L^2(\gamma)} \|D\chi_\veps\|_{L^2(\gamma)}\! +\!
   \|D\chi_\veps\|_{L^2(\gamma)}\|\chi_\veps \! -\! \chi_k\|_{L^2(\gamma)}\big)\\
   &\geq -  2\kappa \veps^{\frac 34} \|A\|\big( \|D\rho_\veps\|_{L^2(\gamma)}+\|D\chi_\veps\|_{L^2(\gamma)} \big),
\end{split}
\end{equation*} where $\|A\|=\sup\{|A\xi|; \xi\in \R^d, |\xi|=1\}$. Combining this estimate with \eqref{h-16}, \eqref{h-15}, we  finally get that
\begin{equation}\label{h-17}
\begin{split}
(\mathrm{I\!I})&\geq -   \frac{C_2 }{\theta} \|\rho_\veps-\chi_\veps\|_{L^2(\gamma)}^2\! - \! 2\kappa \veps^{\frac 34} \|A\| \big( \|D\rho_\veps\|_{L^2(\gamma)} \!+\!  \|D\chi_\veps\|_{L^2(\gamma)} \big) \\
&\quad -2C_2 \tilde \alpha\e^{\eta (2T-t_\veps -s_\veps)}\big(\|\rho_\veps\|_{L^2(\gamma)}^2+\| \chi_\veps\|_{L^2(\gamma)}^2\big),
\end{split}
\end{equation}
where $C_2= {\kappa \sum_{i,j=1}^d |a_{ij}|}/{4}$.

At last, we deal with term $(\mathrm{I})$.
\begin{equation}\label{h-18}
\begin{split}
  (\mathrm{I})&=\frac1\theta\int_{\R^d}\!  \la b(s_\veps, x,\chi_\veps,\alpha), D\big((\rho_\veps(x)-\chi_\veps(x))\gamma(x)\big)\raa \chi_\veps(x) \d x\\
  &\quad -\frac 1\theta\int_{\R^d}\! \la b(t_\veps, x,\rho_\veps, \alpha), D\big((\rho_\veps(x)-\chi_\veps(x))\gamma(x)\big)\raa \rho_\veps(x)\d x\\
  &-\tilde \alpha \e^{\eta(2T- t_\veps - s_\veps)} \int_{\R^d}\!\la b(s_\veps, x,\chi_\veps,\alpha), D\big(\chi_\veps(x)\gamma(x)\big)\raa \chi_\veps(x) \d x\\
  &-\tilde \alpha \e^{\eta(2T- t_\veps - s_\veps)} \int_{\R^d}\!\la b(t_\veps,x,\rho_\veps,\alpha), D\big(\rho_\veps(x)\gamma(x)\big)\raa \rho_\veps(x)\d x\\
  &-2\sqrt{\veps}\!\sum_{k=1}^\infty \!\beta_k\!\int_{\R^d}\! \la b(s_\veps,x,\chi_\veps,\alpha), D\big( (\chi_\veps(x)-\chi_k(x)) \gamma(x)\big)\raa \chi_\veps(x)\d x\\
  &-2\sqrt{\veps}\!\sum_{k=1}^\infty \!\beta_k\!\int_{\R^d}\! \la b(t_\veps,x,\rho_\veps,\alpha), D\big((\rho_\veps(x)-\rho_k(x))\gamma(x)\big)\raa \rho_\veps (x)\d x\\
  &=:(\mathrm{I}_1)+(\mathrm{I}_2)+(\mathrm{I}_3).
\end{split}
\end{equation}
Similar to the estimates of $(\mathrm{I\!I}_2)$ and $(\mathrm{I\!I}_3)$, we can obtain that
\begin{align}\label{h-20}
(\mathrm{I}_2)&\geq -\tilde \alpha \e^{\eta(2T- t_\veps - s_\veps)} \tilde K_2 (\kappa+\frac 12)\big(\|\rho_\veps\|_{H_1^2(\gamma)}^2 +\|\chi_\veps\|_{H_1^2(\gamma)}^2\big),\\ \notag
(\mathrm{I}_3)&\geq - 4\tilde K_2\kappa\sqrt{\veps}\sum_{k=1}^\infty \beta_k\big(\|\chi_\veps-\chi_k\|_{H_1^2(\gamma)} \|\chi_\veps\|_{L^2(\gamma)} + \|\rho_\veps-\rho_k\|_{H_1^2(\gamma)}  \|\rho_\veps\|_{L^2(\gamma)}\big) \\ \label{h-21}
&\geq -4\tilde K_2\kappa\veps^{\frac 34}\big(\|\chi_\veps\|_{L^2(\gamma)} +\|\rho_\veps\|_{L^2(\gamma)}\big).
\end{align}
Furthermore, using $(\mathbf{A}_1)$, \eqref{norm-1}  and H\"older's inequality,
\begin{equation}\label{h-19}
\begin{aligned}
  (\mathrm{I}_1)&=\frac1\theta \int_{\R^d} \!\la b(s_\veps, x,\chi_\veps,\alpha)\!-\! b(t_\veps,x,\rho_\veps, \alpha), D\rho_\veps(x)\!-\!D\chi_\veps(x)\raa \chi_\veps(x)\gamma(x)\d x\\
  &\quad +\frac 1\theta \int_{\R^d}\!\la b(t_\veps,x,\rho_\veps,\alpha),D\rho_\veps(x) -D\chi_\veps(x)\raa (\chi_\veps(x)-\rho_\veps(x)) \gamma(x)\d x\\
  &\quad + \frac 1\theta \int_{\R^d}\! \la b(s_\veps, x,\chi_\veps,\alpha)\!-\! b(t_\veps,x,\rho_\veps, \alpha), D\gamma(x)\raa (\rho_\veps(x)\! -\!\chi_\veps (x))\chi_\veps(x)\d x\\
  &\quad -\frac 1\theta\int_{\R^d}\! \la b(t_\veps,x,\rho_\veps,\alpha), D\gamma(x)\raa \big(\rho_\veps(x)-\chi_\veps(x)\big)^2\d x\\
  &\geq -\frac{\tilde K_1}\theta  \big(| s_\veps-t_\veps| + \| \rho_\veps-\chi_\veps \|_{L^2(\gamma)}\big) \int_{\R^d}\! |D\rho_\veps(x)\!-\!D\chi_\veps(x)| \chi_\veps(x)\gamma(x)\d x\\
  &\quad -\frac{\tilde K_2}{\theta} \int_{\R^d}\!|D\rho_\veps(x)-D\chi_\veps(x)||\rho_\veps(x) -\chi_\veps(x)|\gamma(x)\d x\\
  &\quad -\frac{\tilde K_1\kappa}\theta  \big(| s_\veps-t_\veps| + \| \rho_\veps-\chi_\veps \|_{L^2(\gamma)}\big)\int_{\R^d}\! |\rho_\veps (x)-\chi_\veps(x)|\chi_\veps(x)\gamma(x)\d x\\
  &\quad -\frac{\tilde K_2\kappa}{\theta}\int_{\R^d} \big(\rho_\veps(x)-\chi_\veps(x)\big)^2\gamma(x)\d x\\
  &\geq -\frac{\tilde K_1\kappa}\theta  \big(| s_\veps-t_\veps| + \| \rho_\veps-\chi_\veps \|_{L^2(\gamma)}\big)\|\rho_\veps-\chi_\veps\|_{H_1^2(\gamma)} \|\chi_\veps\|_{L^2(\gamma)}\\
  &\quad -\frac{\tilde K_2}{\theta} \|D\rho_\veps-D\chi_\veps\|_{L^2(\gamma)}\|\rho_\veps -\chi_\veps\|_{L^2(\gamma)} -\frac{\tilde K_2\kappa}{\theta} \|\rho_\veps-\chi_\veps\|_{L^2(\gamma)}^2\\
  &\geq -\frac{2\tilde K_1\kappa}{\theta}\big(|s_\veps\! -\! t_\veps|^2\!+ \! \|\rho_\veps\!-\!  \chi_\veps\|_{H_1^2(\gamma)}^2\big) \|\chi_\veps\|_{L^2(\gamma)} -\frac{2\tilde K_2\kappa}{\theta} \|\rho_\veps \!-\! \chi_\veps\|_{H_1^2(\gamma)}^2.
\end{aligned}
\end{equation}

Consequently, inserting \eqref{h-13}, \eqref{h-17}, \eqref{h-19}-\eqref{h-21} into \eqref{h-12}, we obtain
\begin{align*}
  &-2\beta -2\sqrt{\veps} \sum_{k=1}^\infty\beta_k(t_\veps-t_k+s_\veps-s_k)\\
  &\geq  \tilde \alpha \e^{\eta(2T-t_\veps-s_\veps)}\big(2\eta -2C_2-\tilde K_2(\kappa+\frac 12)\big) \big(\|\rho_\veps\|_{H_1^2(\gamma)}^2 +\|\chi_\veps\|_{H_1^2(\gamma)}^2\big) \\
  &\quad -\tilde K_3C_1\big(|t_\veps-s_\veps|+\|\rho_\veps -\chi_\veps\|_{L^2(\gamma)}\big) \|\chi_\veps\|_{L^2(\gamma)}  -\tilde K_3 C_1 \|\rho_\veps -\chi_\veps\|_{L^2(\gamma)}\\
  &\quad - \frac{C_2 }{\theta} \|\rho_\veps- \chi_\veps\|_{L^2(\gamma)}^2-2K_3\veps^{\frac 34}\|A\|\big(\|D\rho_\veps\|_{L^2(\gamma)} \!+\! \|D\chi_\veps\|_{L^2(\gamma)} \big)\\
  &\quad -4\tilde K_2\kappa\veps^{\frac 34} \big(\|\chi_\veps\|_{L^2(\gamma)}+ \|\rho_\veps\|_{L^2(\gamma)}\big)\\
  &\quad -\frac{2\tilde K_1\kappa}{\theta}\big(|s_\veps -t_\veps|^2\!+\!\|\rho_\veps\!-\! \chi_\veps\|_{H_1^2(\gamma)}^2\big) \|\chi_\veps\|_{L^2(\gamma)} -\frac{2\tilde K_2\kappa}{\theta}\|\rho_\veps -\chi_\veps\|_{H_1^2(\gamma)}^2.
\end{align*}
We take $\eta>0$ sufficiently large so that $2\eta -2C_2-\tilde K_2(\kappa+\frac 12)\geq 0$, and use the estimate \eqref{h-5} to deduce from the previous inequality that
\begin{align*}
  &-2\beta -2\sqrt{\veps} \sum_{k=1}^\infty\beta_k(t_\veps-t_k+s_\veps-s_k)\\
  &\geq -\tilde K_3C_1\big( |t_\veps-s_\veps|+\|\rho_\veps -\chi_\veps\|_{L^2(\gamma)}\big) \Big(\frac{M_1}{\tilde \alpha-4\sqrt{\veps}}\Big)^{\frac 12} -\tilde K_3 C_1\|\rho_\veps -\chi_\veps\|_{L^2(\gamma)}\\
  &\quad -\frac{C_2}{\theta} \|\rho_\veps-\chi_\veps\|_{L^2(\gamma)}^2-\big( 4K_3\|A\|+ 8\tilde K_2\kappa\big)\veps^{\frac 34} \Big(\frac{M_1}{\tilde \alpha-4\sqrt{\veps}}\Big)^{\frac 12}\\
  &\quad -\!\frac{2\tilde K_1\kappa}{\theta} \big(|s_\veps\! -\! t_\veps|^2\!+\!\|\rho_\veps\!-\! \chi_\veps\|_{H_1^2(\gamma)}^2\big) \Big(\frac{M_1}{\tilde \alpha-4\sqrt{\veps}}\Big)^{\frac 12}\! -\! \frac{2\tilde K_2\kappa}{\theta}\|\rho_\veps\! - \!\chi_\veps\|_{H_1^2(\gamma)}^2.
\end{align*}
Letting first $\theta\to 0$  then $\veps\to 0$, according to \eqref{h-8}, \eqref{h-9}, we obtain that
\[-2\beta\geq 0,\]
which contradicts the condition $\beta\in (0,1)$. Thus, we  complete  the proof of this theorem.
\end{proof}

As an application of the comparison principle, in view of Theorem \ref{thm-3},  we can obtain the desired  uniqueness theorem for the value function, whose proof is standard and hence is omitted.

\begin{mycor}[Uniqueness of viscosity solution]
Assume $(\mathbf{A}_1)$-$(\mathbf{A}_3)$  hold. Then the value function $V$ to the optimal control problem \eqref{a-4} is the unique viscosity solution to the  HJB equation \eqref{f-7} on $[0,T]\times \pb_1^r(\R^d)$  satisfying \eqref{cont}.
\end{mycor}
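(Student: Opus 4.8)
The plan is to combine the two main results already established: Theorem~\ref{thm-3} provides a viscosity solution to \eqref{f-7} on $[0,T]\times\mathscr{D}_1^r$, namely the value function $V$, which moreover satisfies \eqref{cont}; and Theorem~\ref{thm-4} provides a comparison principle that pins down any such solution. Since $\pb_1^r(\R^d)$ is identified with $\mathscr{D}_1^r$, the only thing to prove is that an arbitrary viscosity solution $u$ of \eqref{f-7} on $[0,T]\times\mathscr{D}_1^r$ satisfying \eqref{cont} coincides with $V$.

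The first step is to bring the hypothesis \eqref{cont} into the form required by Theorem~\ref{thm-4}. By \eqref{f-5.0} one has $\W_1(\rho\,\d x,\tilde\rho\,\d x)\le\kappa_4\|\rho-\tilde\rho\|_{L^2(\gamma)}$, so every function obeying \eqref{cont} also obeys \eqref{reg} after relabelling the constant; in particular $u$ and $V$ are Lipschitz in $\rho$ for $\|\cdot\|_{L^2(\gamma)}$, hence (since $\|\cdot\|_{L^2(\gamma)}\le\|\cdot\|_{H_1^2(\gamma)}$) uniformly continuous for $\|\cdot\|_{H_1^2(\gamma)}$ on $\mathscr{D}_1^r$. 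As $\mathscr{D}_1^r$ is dense in its completion $\bard$, both extend to continuous functions $\bar u,\bar V$ on $[0,T]\times\bard$ that still satisfy \eqref{reg}. As noted just before Theorem~\ref{thm-4}, the coefficients $b,f,g$, the Hamiltonian $\mathcal{H}$, and the terminal datum $\rho\mapsto\int_{\R^d}g(x,\rho)\rho(x)\,\d x$ extend continuously to $\bard$, so equation \eqref{f-7} is meaningful there.

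The technical core is to check that $\bar u$, and likewise $\bar V$, is a viscosity solution of the extended equation \eqref{f-7} on $[0,T)\times\bard$. For the subsolution inequality, given $\psi\in C^{1,1}([0,T)\times\bard)$ and a maximum point $(t_0,\rho_0)$ of $\bar u-\psi$, I would perturb $\psi$ by a small multiple of $\|\rho-\rho_0\|_{H_1^2(\gamma)}^2$, or invoke the Borwein--Preiss principle of Lemma~\ref{lem-9}, to produce for each small parameter a near-maximizer lying in the dense set $\mathscr{D}_1^r$, apply the known inequality \eqref{e-2} there, and then let the parameter tend to zero, using the continuity of $\partial_t\psi$ and of $\rho\mapsto\inf_{\alpha\in\pb(U)}\mathcal{H}(t,\rho,\tfrac{\delta\psi(t,\rho)}{\delta\rho(x)},\alpha)$ that was established inside the proof of Theorem~\ref{thm-3}. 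The supersolution inequality is entirely symmetric, and the terminal values pass to $\bard$ by continuity.

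With this in place, I would apply Theorem~\ref{thm-4} twice: with subsolution $\bar u$ and supersolution $\bar V$ it yields $\bar u\le\bar V$ on $[0,T)\times\bard$, and with subsolution $\bar V$ and supersolution $\bar u$ it yields the reverse inequality; hence $\bar u=\bar V$ on $[0,T)\times\bard$, and in particular $u=V$ on $[0,T)\times\mathscr{D}_1^r$. At $t=T$ both equal $\int_{\R^d}g(x,\rho)\rho(x)\,\d x$ by the definition of viscosity solution, so $u\equiv V$ on $[0,T]\times\mathscr{D}_1^r=[0,T]\times\pb_1^r(\R^d)$, which is the assertion. I expect the main obstacle to be precisely the stability step of the previous paragraph: since bounded sets in $\bard$ are not precompact, one cannot extract convergent subsequences of maximizers, so transferring the viscosity inequalities from $\mathscr{D}_1^r$ to its completion genuinely requires reusing the variational-principle machinery already developed for Theorems~\ref{thm-3} and~\ref{thm-4}.
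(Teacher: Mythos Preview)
Your proposal is correct and is precisely the standard argument the paper alludes to and omits: Theorem~\ref{thm-3} supplies existence and the regularity \eqref{cont} (hence \eqref{reg} via \eqref{f-5.0}), and two applications of Theorem~\ref{thm-4} with the roles of sub- and supersolution interchanged force any other viscosity solution satisfying \eqref{cont} to coincide with $V$. Your extra care about extending the viscosity inequalities from $\mathscr{D}_1^r$ to its completion $\bard$ addresses a point the paper disposes of in the single paragraph preceding Theorem~\ref{thm-4}; this is a welcome scruple, not a different route.
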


\end{document}